\newcommand{\N }{\mathbb{N}}
\newcommand{\pr}[1]{\mathbb{P}^{#1}}
\newcommand{\GL}[1]{GL_{#1}}
\newcommand{\diag}[1]{\mathrm{diag}(#1)}
\newcommand{\orb}[1]{\mathcal{O}(#1)}
\newcommand{\vp}[1]{\varphi[#1]}
\theoremstyle{plain} 
\newtheorem{thm}{Theorem}[section] 
\newtheorem{thmalph}{Theorem}
  \newtheorem{prop}[thm]{Proposition} %
  \newtheorem{lemma}[thm]{Lemma} %
\theoremstyle{definition} 
  \newtheorem{df}[thm]{Definition}
\newtheorem*{notation}{Notation}
  \newtheorem{exmp}[thm]{Example}
  \newtheorem{remark}[thm]{Remark}%
\newcommand{\bethm}{\begin{thm}}
\newcommand{\enthm}{\end{thm}}
\newcommand{\bedef}{\begin{df}}
\newcommand{\beprop}{\begin{prop}}
\newcommand{\enprop}{\end{prop}}
\newcommand{\belem}{\begin{lemma}}
\newcommand{\enlem}{\end{lemma}}
\newcommand{\berem}{\begin{remark}}
\newcommand{\enrem}{\end{remark}}
\newcommand{\beex}{\begin{exmp}}
\newcommand{\ene}{\end{exmp}}
\newcommand{\beeq}{\begin{equation}}
\newcommand{\eneq}{\end{equation}}
\newcommand{\bealign}{\begin{align}}
\newcommand{\bear}{\begin{array}}
\newcommand{\enar}{\end{array}}
\newcommand{\beca}{\begin{cases}}
\newcommand{\enca}{\end{cases}}
\newcommand{\been}{\begin{enumerate}}
\newcommand{\enen}{\end{enumerate}}
\title{Description of $GL_3$-orbits on the quadruple projective varieties
}
\author{Naoya SHIMAMOTO \\ Chiba Institute of Technology
}
\date{}
\begin{document}
\maketitle
\begin{abstract}
This article gives a description of the diagonal $GL_3$-orbits on the quadruple projective variety $(\mathbb P^2)^4$. 
We give explicit representatives of orbits, and describe the closure relations of orbits. 
A distinguished feature of our setting is that it is the simplest case where $\diag{\GL{n}}$ has infinitely many orbits but has an open orbit in the multiple projective space $(\mathbb P^{n-1})^m$.  
%
%
%
%
\end{abstract}

\tableofcontents

\section{Introduction}

The object of this article is an orbit decomposition of the flag variety $G/P$ under the action of  a closed subgroup $H$ of a reductive group $G$ in the setting that $H$ has an open orbit in $G/P$ but $\#(H\backslash G/P)=\infty$.

The study of the double coset $H\backslash G/P$ is motivated by the representation theory. 
For example, for a real reductive algebraic group $G$, and for an algebraically defined closed subgroup $H$, Kobayashi-Oshima \cite{ko} proved that the finiteness (resp. boundedness) of the multiplicities of irreducible admissible representations $\pi$ of $G$ occurring in the induced representations $\mathrm{Ind}_H^G(\tau)$ for finite-dimensional irreducible representations $\tau$ of $H$ is equivalent to the existence of open $H$-orbits (resp. $H_c$-orbits) on $G/P_G$ (resp. $G_c/B_G$), where $P_G$ is a minimal parabolic subgroup of $G$, and $B_G$ is a Borel subgroup of the complexification $G_c$ of $G$. 
A homogeneous space $G/H$ satisfying these equivalent conditions is called a real spherical variety (resp.\ spherical variety). 
In this case, the existence of an open $H$-orbit is known to be the finiteness of $H$-orbits \cite{br1, br2, morb}. 
Needless to say, the latter implies the former, however, the converse may fail for 
a general parabolic subgroup $P$ of $G$. 
Even in such cases, the existence of open $H$-orbits or the finiteness of $H$-orbits on the flag variety $G/P$ gives useful information on the branching laws between representations of $H$ and representations of $G$ induced from characters of $P$ \cite{k,tt}. 

Kobayashi-Oshima \cite{ko} also proved that the finiteness of the dimension of the space of symmetry breaking operators ($H$-intertwining operators from irreducible admissible representations of $G$ to those of $H$) is equivalent to the existence of open $P_H$-orbits on $G/P_G$ where $H$ is reductive and $P_H$ is its minimal parabolic subgroup. 
Furthermore, such pairs $(G,H)$ are classified by Kobayashi-Matsuki \cite{km} under the assumption that $(G,H)$ are symmetric pairs. 
Moreover, an explicit description of the double coset $P_H\backslash G/P_G\cong \mathrm{diag}(H)\backslash(G\times H)/(P_G\times P_H)$ parametrises the ``families'' of symmetry breaking operators as in the works \cite{kl,ks,ks2} for the pair $(G,H)=(O(p+1,q),O(p,q))$ which is in the classification given by \cite{km}. 
Our research comes from these motivations. 

\bigskip
So far, an explicit description of the double coset $H\backslash G/P$ has been studied only when $H$ has finitely many orbits in $G/P$.  
For instance, the descriptions of $H$-orbits on $G/P$ for symmetric pairs $(G,H)$ have been well studied by Matsuki \cite{mp,mpp}, 
generalising the Bruhat decomposition where $\mathrm{diag}(G)\backslash (G\times G)/(P_1\times P_2)$ may be identified with $P_1\backslash G/P_2$. 
Matsuki \cite{mBex,mB} gave also a description of diagonal action of orthogonal groups on multiple flag varieties $G^m/(P_1\times P_2\times\cdots\times  P_m)$ under the assumption that the number of orbits is finite, referred to as ``finite type'', which were also classified in that work. 
Note that the pair $(G^m, \mathrm{diag}(G))$ is no more a symmetric pair if $m\geq 3$, and multiple flag varieties of finite type were earlier classified by \cite{mwzA,mwzC} where $G$ is of type $A$ or $C$. 



\bigskip
In this article, we are interested in the case where $\#(H\backslash G/P)=\infty$. We fix $\mathbb K$ to be an algebraically closed field with characteristic $0$, and $Q_n$ to be the maximal parabolic subgroup of $\GL{n}$ such that $GL_n/Q_n\cong\pr{n-1}$ over $\mathbb K$. 
We at first prove the following. 
\begin{thmalph}[see Theorems \ref{thm:open} and \ref{thm:finite}] \label{thm:openandfinite}
Let $n,m$ be positive integers with $n\geq 2$, 
and $\mathbb K$ be an algebraically closed field with characteristic $0$, then the followings hold: 
\begin{enumerate}
\item the number of $\diag{GL_n}$-orbits on $(\pr{n-1})^m$ is finite if and only if $m\leq 3$; 
\item there exists an open $\diag{GL_n}$-orbit on $(\pr{n-1})^m$ if and only if $m\leq n+1$. 
\end{enumerate} 
\end{thmalph}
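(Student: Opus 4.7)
The plan is a dimension count combined with an explicit general-position construction. Since the center $\K^\times \subset \GL{n}$ acts trivially on each $\pr{n-1}$, the action of $\diag{\GL{n}}$ on $(\pr{n-1})^m$ factors through $PGL_n$, of dimension $(n-1)(n+1)$. As orbits have dimension at most $\dim PGL_n$ while $\dim(\pr{n-1})^m = m(n-1)$, the existence of an open orbit forces $m \le n+1$. Conversely, for $m \le n$ the orbit through $([e_1],\ldots,[e_m])$ is the set of $m$-tuples of linearly independent lines, a dense open subset; for $m = n+1$ the orbit through $([e_1],\ldots,[e_n],[e_1+\cdots+e_n])$ is open by the classical fact that $PGL_n$ acts simply transitively on ordered $(n+1)$-tuples in general position in $\pr{n-1}$. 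Matching stabiliser dimensions confirms that each such orbit has full dimension $m(n-1)$.

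\textbf{Part (1), case $m \le 3$.} For $m = 1, 2$ the orbit count is $1$ and $2$, using transitivity of $\GL{n}$ on $\pr{n-1}$ and on ordered pairs of distinct points. For $m = 3$, I parametrise orbits by two invariants: (i) the partition of $\{1,2,3\}$ indicating which $x_i$ coincide, and (ii) $d := \dim_{\K}\langle x_1,x_2,x_3\rangle \in \{1,2,3\}$. Two triples with the same invariants are shown $\diag{\GL{n}}$-conjugate by elementary linear algebra: for $d = 3$ send $x_1, x_2, x_3$ to $[e_1], [e_2], [e_3]$ by a basis change; for $d = 2$ with all entries distinct, first move the spanning projective line to a fixed copy of $\pr{1}$ and then apply the $3$-transitivity of $PGL_2$ on $\pr{1}$; the remaining subcases are immediate. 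Counting the admissible pairs of invariants gives $5$ orbits when $n = 2$ and $6$ when $n \ge 3$.

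\textbf{Part (1), case $m \ge 4$.} Appending constant entries reduces to $m = 4$. The plan is to exhibit a continuous $\diag{\GL{n}}$-invariant: fix a projective line $L \subset \pr{n-1}$ and consider quadruples of pairwise distinct points on $L$. Any $g \in \GL{n}$ carrying such a quadruple to another must send $L$ to a projective line and restrict there to a projective isomorphism $\pr{1} \to \pr{1}$, which preserves the cross-ratio. Since the cross-ratio attains every value in $\K \setminus \{0,1\}$, this produces infinitely many distinct orbits.

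\textbf{Expected obstacle.} The substantive work is the exhaustiveness and non-redundancy of the normal forms in the $m = 3$ classification. This is entirely elementary, but the case analysis must be tracked carefully to avoid double-counting across the coincidence partitions and dimension values. All other steps reduce to transitivity statements or dimension inequalities.
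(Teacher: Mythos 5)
Your proposal is correct and follows essentially the same route as the paper: explicit representatives with stabiliser/dimension counts (and the reduction modulo the trivially acting centre) for the open-orbit criterion, a coincidence-pattern analysis for $m\le 3$, and a continuous modulus on collinear quadruples for $m\ge 4$. Your cross-ratio invariant is exactly the paper's family $[e_1,e_2,e_1+e_2,p,\ldots,p]$, $p\in\mathbb P^1$, in normalised coordinates, and your $m=3$ normal-form count ($5$ orbits for $n=2$, $6$ for $n\ge 3$) merely refines the paper's finiteness argument.
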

In particular, there exist infinitely many orbits and an open orbit simultaneously if and only if $4\leq m\leq n+1$. Hence, the case $(n,m)=(3,4)$ can be regarded as the simplest case among them. 
In this article, we highlight this case, and give an explicit description of $\diag{\GL{3}}$-orbits on $(GL_3/Q_3)^4\cong(\pr{2})^4$ and their closure relations. 
For this aim, we introduce the following $\diag{GL_3}$-invariant map with a finite image: 
\beeq \label{eq:prbintro}
\begin{array}{cccc}
\pi\colon & (\mathbb P^2)^4 & \to & Map(2^{\{1,2,3,4\}},\mathbb N) \\
&\rotatebox{90}{$\in$} && \rotatebox{90}{$\in$} \\
&([v_i])_{i=1}^4 &\mapsto &\left(I\mapsto \dim\mathrm{Span}\{v_i\}_{i\in I}\right)
\end{array}
\eneq
Our strategy is to determine the image of $\pi$ in $Map(2^{\{1,2,3,4\}},\mathbb N)$, and to describe $\diag{GL_3}$-orbit decomposition of each fibre $\pi^{-1}(\varphi)$ for $\varphi\in\mathrm{Image}(\pi)$. Here is a description of $\mathrm{diag}(GL_3)$-orbits on $(GL_3/Q_3)^4\cong (\pr{2})^4$.  
\begin{thmalph}[see Theorems \ref{thm:single}, \ref{thm:parameter}, and Propositions \ref{thm:clsingle} and \ref{thm:clparameter}] \label{thm:orb34} 

\been
\item The map $\pi\colon (\mathbb P^2)^4\to Map(2^{\{1,2,3,4\}},\mathbb N)$ is invariant under the diagonal action of $G=GL_3$.  
\item
A map $\varphi\in Map(2^{\{1,2,3,4\}},\mathbb N)$ belongs to the image of $\pi$ if and only if $\varphi[\ast]$ appears in Figure \ref{hasseintro} via the correspondence $\varphi\leftrightarrow\varphi[\ast]$ (see Definitions \ref{df:map} and \ref{df:mapsplit}). 

\item For each $\varphi[\ast]$ listed in Figure \ref{hasseintro}, the fibre $\pi^{-1}(\varphi[\ast])$ is a single $\diag{G}$-orbit unless $\varphi[\ast]=\vp{6}$.The fibre $\pi^{-1}(\vp{6})$ is decomposed into infinitely many $\diag{G}$-orbits by means of $5$-dimensional orbits $\orb{5;p}$ with parameters $p\in \mathbb P^1\setminus\{0,1,\infty\}$, see Definition \ref{df:mapsplit}: 
\item If we fix $p\in\pr{1}\setminus\{0,1,\infty\}$, then the closure relations among all fibres of $\vp{\ast}$ and the orbit $\orb{5;p}$ are given by the following Hasse diagram in Figure \ref{hasseintro}. 
\enen
\begin{figure}[H]
\begin{minipage}{0.78\hsize}
\centering
\begin{tikzpicture}[scale=1.4]
\node(o8)at(-0.5,3){\scalebox{0.8}{$\vp{8}$}};
\node(o71)at(-2.6,2){\scalebox{0.8}{$\vp{7;1}$}};
\node(o72)at(-1.2,2){\scalebox{0.8}{$\vp{7;2}$}};
\node(o73)at(0.2,2){\scalebox{0.8}{${\vp{7;3}}$}};
\node(o74)at(1.6,2){\scalebox{0.8}{${\vp{7;4}}$}};
\node(o634)at(-3.5,1){\scalebox{0.8}{${\vp{6;3,4}}$}};
\node(o624)at(-2.3,1){\scalebox{0.8}{${\vp{6;2,4}}$}};
\node(o614)at(-1.1,1){\scalebox{0.8}{${\vp{6;1,4}}$}};
\node(o623)at(0.1,1){\scalebox{0.8}{${\vp{6;2,3}}$}};
\node(o613)at(1.3,1){\scalebox{0.8}{${\vp{6;1,3}}$}};
\node(o612)at(2.5,1){\scalebox{0.8}{${\vp{6;1,2}}$}};
\node(p6)at(3.8,0.7){\scalebox{0.8}{${\vp{6}}$}};
\node(o534)at(-3.5,0){\scalebox{0.8}{${\vp{5;3,4}}$}};
\node(o524)at(-2.3,0){\scalebox{0.8}{${\vp{5;2,4}}$}};
\node(o514)at(-1.1,0){\scalebox{0.8}{${\vp{5;1,4}}$}};
\node(o523)at(0.1,0){\scalebox{0.8}{${\vp{5;2,3}}$}};
\node(o513)at(1.3,0){\scalebox{0.8}{${\vp{5;1,3}}$}};
\node(o512)at(2.5,0){\scalebox{0.8}{${\vp{5;1,2}}$}};
\node(o5)at(3.8,-0.2){\scalebox{0.8}{$\orb{5;p}$}};
\node(o412)at(-2.5,-1){\scalebox{0.8}{${\vp{4;1,2}}$}};
\node(o414)at(-0.5,-1){\scalebox{0.8}{${\vp{4;1,4}}$}};
\node(o413)at(1.5,-1){\scalebox{0.8}{${\vp{4;1,3}}$}};
\node(o41)at(-3.5,-1.2){\scalebox{0.8}{${\vp{4;1}}$}};
\node(o42)at(-1.5,-1.2){\scalebox{0.8}{${\vp{4;2}}$}};
\node(o43)at(0.5,-1.2){\scalebox{0.8}{${\vp{4;3}}$}};
\node(o44)at(2.5,-1.2){\scalebox{0.8}{${\vp{4;4}}$}};
\node(o2)at(-0.5,-2.2){\scalebox{0.8}{${\vp{2}}$}};
\draw[ultra thin](-0.5,2.85)--(-2.6,2.15);
\draw[ultra thin](-0.5,2.85)--(-1.2,2.15);
\draw[ultra thin](-0.5,2.85)--(0.2,2.15);
\draw[ultra thin](-0.5,2.85)--(1.6,2.15);
\draw[ultra thin](-2.6,1.85)--(-3.5,1.15);
\draw[ultra thin](-2.6,1.85)--(-2.3,1.15);
\draw[ultra thin](-2.6,1.85)--(0.1,1.15);
\draw[ultra thin](-1.2,1.85)--(-3.5,1.15);
\draw[ultra thin](-1.2,1.85)--(-1.1,1.15);
\draw[ultra thin](-1.2,1.85)--(1.3,1.15);
\draw[ultra thin](0.2,1.85)--(-2.3,1.15);
\draw[ultra thin](0.2,1.85)--(-1.1,1.15);
\draw[ultra thin](0.2,1.85)--(2.5,1.15);
\draw[ultra thin](1.6,1.85)--(0.1,1.15);
\draw[ultra thin](1.6,1.85)--(1.3,1.15);
\draw[ultra thin](1.6,1.85)--(2.5,1.15);
\draw[ultra thin](-2.6,1.85)--(3.8,1.05);
\draw[ultra thin](-1.2,1.85)--(3.8,1.05);
\draw[ultra thin](0.2,1.85)--(3.8,1.05);
\draw[ultra thin](1.6,1.85)--(3.8,1.05);
\draw[ultra thin](3.8,1.05)--(3.8,0.85);
\draw[ultra thin](-3.5,0.85)--(-3.5,0.15);
\draw[ultra thin](-2.3,0.85)--(-2.3,0.15);
\draw[ultra thin](-1.1,0.85)--(-1.1,0.15);
\draw[ultra thin](0.1,0.85)--(0.1,0.15);
\draw[ultra thin](1.3,0.85)--(1.3,0.15);
\draw[ultra thin](2.5,0.85)--(2.5,0.15);
\draw[ultra thin](3.8,0.55)--(o5);
\draw[ultra thin](3.8,0.55)--(-3.5,0.15);
\draw[ultra thin](3.8,0.55)--(-2.3,0.15);
\draw[ultra thin](3.8,0.55)--(-1.1,0.15);
\draw[ultra thin](3.8,0.55)--(0.1,0.15);
\draw[ultra thin](3.8,0.55)--(1.3,0.15);
\draw[ultra thin](3.8,0.55)--(2.5,0.15);
\draw[ultra thin](-3.5,-0.15)--(-2.5,-0.85);
\draw[ultra thin](-2.3,-0.15)--(-0.5,-0.85);
\draw[ultra thin](-1.1,-0.15)--(1.5,-0.85);
\draw[ultra thin](0.1,-0.15)--(1.5,-0.85);
\draw[ultra thin](1.3,-0.15)--(-0.5,-0.85);
\draw[ultra thin](2.5,-0.15)--(-2.5,-0.85);
\draw[ultra thin](-3.5,-0.15)--(-3.5,-1.05);
\draw[ultra thin](-3.5,-0.15)--(-1.5,-1.05);
\draw[ultra thin](-2.3,-0.15)--(-3.5,-1.05);
\draw[ultra thin](-2.3,-0.15)--(0.5,-1.05);
\draw[ultra thin](-1.1,-0.15)--(-1.5,-1.05);
\draw[ultra thin](-1.1,-0.15)--(0.5,-1.05);
\draw[ultra thin](0.1,-0.15)--(-3.5,-1.05);
\draw[ultra thin](0.1,-0.15)--(2.5,-1.05);
\draw[ultra thin](1.3,-0.15)--(-1.5,-1.05);
\draw[ultra thin](1.3,-0.15)--(2.5,-1.05);
\draw[ultra thin](2.5,-0.15)--(0.5,-1.05);
\draw[ultra thin](2.5,-0.15)--(2.5,-1.05);
\draw[ultra thin](o5)--(3.6,-0.54);
\draw[ultra thin](3.6,-0.54)--(-2.3,-0.54);
\draw[ultra thin](-2.3,-0.54)--(-3.5,-1.05);
\draw[ultra thin](-0.75,-0.54)--(-1.5,-1.05);
\draw[ultra thin](0.7,-0.54)--(0.5,-1.05);
\draw[ultra thin](2.6,-0.54)--(2.5,-1.05);
\draw[ultra thin](-2.5,-1.15)--(-0.5,-2.05);
\draw[ultra thin](-0.5,-1.15)--(-0.5,-2.05);
\draw[ultra thin](1.5,-1.15)--(-0.5,-2.05);
\draw[ultra thin](-3.5,-1.35)--(-0.5,-2.05);
\draw[ultra thin](-1.5,-1.35)--(-0.5,-2.05);
\draw[ultra thin](0.5,-1.35)--(-0.5,-2.05);
\draw[ultra thin](2.5,-1.35)--(-0.5,-2.05);
\draw[ultra thin](o5)--(3.8,-1.25);
\draw[ultra thin](3.8,-1.25)--(-0.5,-2.05);
\end{tikzpicture}
\caption{Hasse diagram}
\label{hasseintro}
\end{minipage}
\hspace{-7mm}
\begin{minipage}{0.19\hsize}
\centering
\begin{tikzpicture}[scale=1.3]
\node(o8)at(0,3){\scalebox{0.8}{$\{{\vp{8}}\}$}};
\node(o7)at(0,2){\scalebox{0.8}{$\{{\vp{7;\ast}}\}$}};
\node(o6)at(-0.4,1){\scalebox{0.8}{$\{{\vp{6;\ast,\ast}}\}$}};
\node(p6)at(0.8,0.7){\scalebox{0.8}{$\{{\vp{6}}\}$}};
\node(o5)at(-0.4,0){\scalebox{0.8}{$\{{\vp{5;\ast,\ast}}\}$}};
\node(o5p)at(0.8,-0.45){\scalebox{0.8}{$\{\orb{5;q}\left|q\in O_p\right.\}$}};
\node(o4)at(-0.8,-1){\scalebox{0.8}{$\{{\vp{4;\ast,\ast}}\}$}};
\node(o4')at(0.2,-1.4){\scalebox{0.8}{$\{{\vp{4;\ast}}\}$}};
\node(o2)at(0,-2.2){\scalebox{0.8}{$\{{\vp{2}}\}$}};
\draw[ultra thin](0,2.85)--(0,2.15);
\draw[ultra thin](0,1.85)--(-0.4,1.15);
\draw[ultra thin](0,1.85)--(0.8,0.85);
\draw[ultra thin](-0.4,0.85)--(-0.4,0.15);
\draw[ultra thin](0.8,0.55)--(-0.4,0.15);
\draw[ultra thin](0.8,0.55)--(o5p);
\draw[ultra thin](-0.4,-0.15)--(-0.8,-0.8);
\draw[ultra thin](-0.4,-0.15)--(0.2,-1.25);
\draw[ultra thin](-0.8,-1.15)--(0,-2.05);
\draw[ultra thin](0,-1.55)--(0,-2.05);
\draw[ultra thin](0.9,-0.6)--(0.2,-1.25);
\draw[ultra thin](0.9,-0.6)--(0.9,-1.55);
\draw[ultra thin](0.9,-1.55)--(0,-2.05);
\end{tikzpicture}
\hspace{-7mm}
\caption{mod $\mathcal{S}_4$}
\label{hassemods4}
\end{minipage}
\end{figure}
\end{thmalph}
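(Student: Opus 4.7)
The plan is to prove (1)--(4) sequentially, using the matroid rank function encoded by $\pi$ as the unifying tool.

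Part (1) is immediate: any $g \in GL_3$ acts as a linear automorphism of $\mathbb{K}^3$, so $\dim \mathrm{Span}\{g v_i\}_{i \in I} = \dim\mathrm{Span}\{v_i\}_{i\in I}$ for every $I \subseteq \{1,2,3,4\}$, and $\pi$ is constant on $\mathrm{diag}(GL_3)$-orbits. For (2), I would observe that the function $\varphi = \pi(([v_i])_i)$ satisfies the axioms of a matroid rank function on $\{1,2,3,4\}$: $\varphi(\emptyset)=0$, $\varphi(\{i\})=1$ (since $v_i\neq 0$), monotonicity, submodularity $\varphi(I \cup J) + \varphi(I \cap J) \leq \varphi(I) + \varphi(J)$, and $\varphi(I) \leq \min(|I|,3)$. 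A direct combinatorial enumeration of all such functions on $2^{\{1,2,3,4\}}$ then recovers exactly the list $\vp{\ast}$ appearing in Figure~\ref{hasseintro}; realizability of each in $\mathrm{Image}(\pi)$ is verified by writing down an explicit representative in $(\mathbb{P}^2)^4$ (standard basis vectors, coinciding vectors, or vectors chosen on a common line, as appropriate).

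For (3), I would fix a canonical representative for each $\vp{\ast} \neq \vp{6}$ and show that any configuration in the fibre is $\mathrm{diag}(GL_3)$-conjugate to it. The strategy is to send a maximal linearly independent subset of $\{v_1,\ldots,v_4\}$ (of cardinality $\varphi(\{1,2,3,4\})$) to an initial segment of the standard basis via $GL_3$; the remaining $v_i$ are then forced into prescribed subspaces by the rank relations, and the residual diagonal action rescales them to the canonical form. The exceptional fibre $\pi^{-1}(\vp{6})$ consists of four distinct collinear points: after normalizing three of them on a line $\ell \cong \mathbb{P}^1$ via $GL_3$, the fourth point on $\ell$ retains exactly one $\mathrm{diag}(GL_3)$-invariant modulus, namely the cross-ratio $p \in \mathbb{P}^1 \setminus \{0,1,\infty\}$, which parametrizes the orbits $\orb{5;p}$.

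For (4), each covering relation $\psi \prec \varphi$ in Figure~\ref{hasseintro} is verified by exhibiting an explicit one-parameter family $(v_i(t))_i$ of type $\varphi$ for $t\neq 0$ specializing at $t=0$ to a representative of type $\psi$. Conversely, absence of a relation is detected by upper semi-continuity of $\dim\mathrm{Span}$, which forces $\varphi(I)$ to be non-increasing on specialization and thus obstructs non-allowed closures.

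The main obstacle will be the sheer volume of case analysis in (4), amplified by the subtle interaction of the continuous family $\{\orb{5;p}\}$ with its neighbors. One must show that $\overline{\pi^{-1}(\vp{6})}$ contains every $\orb{5;p}$ simultaneously (by letting the cross-ratio vary along with the base degeneration), while the closure of a single $\orb{5;p}$ with $p$ fixed is an irreducible $5$-dimensional variety that cannot meet any $\pi^{-1}(\vp{5;i,j})$ (also $5$-dimensional, and distinct) and must degenerate directly to the $\vp{4;i}$- and $\vp{2}$-strata, requiring cross-ratio-specific family computations.
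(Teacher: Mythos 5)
Parts (1)--(3) of your plan are sound, and in places genuinely different from the paper: the paper classifies the image of $\pi$ via the indecomposable-splitting map $\varpi$ and the set $\mathcal P_{3,4}'$ (Lemmas \ref{thm:imagevarpi}--\ref{thm:rhobij}, Lemma \ref{thm:splittinglist}), whereas you enumerate loopless rank functions of matroids on four elements and exhibit representatives; your normalization argument for (3) and the cross-ratio modulus for $\pi^{-1}(\vp{6})$ match the paper's in substance. For (4), producing the containments $\pi^{-1}(\psi)\subset\overline{\pi^{-1}(\varphi)}$ for $\psi\leq\varphi$ by explicit one-parameter degenerations (using $G$-invariance of closures so that one limit point suffices per orbit) is a legitimate alternative to the paper's route, which instead identifies each $\coprod_{\psi\leq\varphi}\pi^{-1}(\psi)$ with a product of determinantal varieties and invokes irreducibility (Proposition \ref{thm:clsingle}). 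Your exclusion of the $5$-dimensional strata $\pi^{-1}(\vp{5;i,j})$ and of the other $\orb{5;q}$ from $\overline{\orb{5;p}}$ by dimension is also fine, since the boundary of an orbit closure consists of strictly lower-dimensional orbits.

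The genuine gap is in your claim that ``absence of a relation is detected by upper semi-continuity.'' Semicontinuity of $I\mapsto\dim\langle v_i\rangle_{i\in I}$ only shows $\overline{\orb{5;p}}\subset\coprod_{\psi\leq\vp{6}}\pi^{-1}(\psi)$, and this bound does not separate the two kinds of $4$-dimensional strata below $\vp{6}$: both $\vp{4;i}$ and $\vp{4;i,j}$ satisfy $\psi\leq\vp{6}$ and have dimension $4<5$, yet Proposition \ref{thm:clparameter} asserts that for fixed $p\in(\pr{1})'$ the closure $\overline{\orb{5;p}}$ contains every $\pi^{-1}(\vp{4;i})$ but \emph{none} of the two-pair strata $\pi^{-1}(\vp{4;i,j})$. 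Neither semicontinuity nor dimension counting nor your degeneration families can establish this non-containment, and your sketch offers no tool for it. The paper closes exactly this point by reducing to $(\mathbb P^1)^4$ via the closed embedding of quotients (Lemma \ref{thm:iotahomeo}) and producing the explicit multihomogeneous equation $P_p=p_2|1,3||4,2|+p_1|1,4||2,3|$ of (\ref{eq:poly}): it is irreducible precisely when $p\in(\pr{1})'$ (Lemma \ref{thm:polyirred}), its zero set is the orbit closure (Lemmas \ref{thm:orb5pcharacterise} and \ref{thm:polysolution}), and evaluating it at $[e_1,e_1,e_2,e_2]$ gives $p_1-p_2\neq0$, which is the concrete reason the $\vp{4;i,j}$ strata are excluded. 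To repair your argument you would need an analogous invariant-theoretic obstruction --- for instance, showing that the cross-ratio extends regularly (or that $P_p$ vanishes identically) on $\overline{\orb{5;p}}$, or a complete analysis of all one-parameter limits out of $\orb{5;p}$ proving that a limit with exactly two coincidence pairs forces $p\in\{0,1,\infty\}$; without this, the most delicate edge pattern of Figure \ref{hasseintro} (the column containing $\vp{6}$, $\orb{5;p}$, $\vp{4;\ast}$ versus $\vp{4;\ast,\ast}$) is unproved.
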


\berem
\begin{enumerate}
\item Since 
$\diag{G}\backslash(\mathbb P^2)^4$ admits a natural action of the symmetric group $\mathcal{S}_4$, one can consider the quotient $\left(\mathrm{diag}(G)\backslash (\mathbb P^2)^4\right)/\mathcal S_4$ in Figure \ref{hassemods4} where $O_p:=\{p,\frac1p,1-p,\frac1{1-p},1-\frac1p,\frac1{1-\frac1p}\}$. 
\item The symbol $\varphi[\ast]\in Map(2^{\{1,2,3,4\}},\mathbb N)$ has the following information: the dimension of the fibre $\pi^{-1}\left(\varphi[k;J]\right)\subset(\mathbb P^2)^4$ is $k$, where $J\subset \{1,2,3,4\}$ is another parameter. We observe that $\varphi[k;J]$ occurs in Figure \ref{hasseintro} if and only if so is $\varphi[k;\sigma J]$ for $\sigma\in\mathcal S_4$. 
\end{enumerate}
\enrem

\begin{notation}
We set $\N=\{0,1,2,3,\ldots\}$, and $[m]$ denotes the set $\{1,2,\ldots,m \}\subset \mathbb N$ for $m\in \mathbb N$. 

We let $\mathbb K$ be an algebraically closed field with characteristic $0$. For a vector $v\in \mathbb K^n\setminus\{\bm 0\}$, we write the $\mathbb K^\times$-orbit through $v$ as $[v]\in\pr{n-1}$. Similarly for a matrix $(v_i)_{i=1}^m\in M(n,m)$ without any columns equal to $\bm 0$, the notion $[v_i]_{i=1}^m$ denotes the $m$-tuple of $\mathbb P^{n-1}$. For an $m$-tuple $[v_i]_{i=1}^m$ in $\mathbb P^{n-1}$, we write the subspace spanned by these $m$ vectors by $\langle v_i\rangle_{i=1}^m$. Furthermore, $\{e_i\}_{i=1}^n$ denotes the standard basis of $\mathbb K^n$. 

\end{notation}

\section{Existence of open orbits and finiteness of orbits}
\label{openandfinite}

In this section, we prove Theorem \ref{thm:openandfinite}, which determines the existence of open orbits and finiteness of orbits under the diagonal action of $\GL{n}$ on the multiple projective space 
$(\pr{n-1})^m$. 

\begin{thm} \label{thm:open}
Let $n\geq 2$ and $m$ be positive integers. 
There exists an open $\diag{\GL{n}}$-orbit on 
$(\pr{n-1})^m$ if and only if $n\geq m-1$.
\end{thm}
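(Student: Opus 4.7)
The plan is to argue by a dimension count, using that orbits of an algebraic group action are locally closed and irreducible, so that an open $\diag{\GL{n}}$-orbit on the irreducible variety $X:=(\pr{n-1})^m$ exists if and only if some orbit attains the full dimension $\dim X = m(n-1)$. Since $\dim\GL{n}=n^2$, the dimension of the orbit through $x\in X$ equals $n^2-\dim\mathrm{Stab}(x)$, so the task reduces to controlling the minimum possible dimension of a stabiliser.

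For the ``only if'' direction, I observe that the centre $\mathbb K^\times I_n\subset\GL{n}$ acts trivially on $X$, so every stabiliser contains $\mathbb K^\times I_n$ and therefore has dimension at least $1$. Consequently, every orbit has dimension at most $n^2-1=(n-1)(n+1)$. If $m\geq n+2$, then $m(n-1)\geq(n+2)(n-1)>(n-1)(n+1)$, so no orbit can be full-dimensional, ruling out the existence of an open orbit.

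For the ``if'' direction, I would exhibit explicit points whose orbits achieve the required dimension. When $m\leq n$, take $x=([e_1],\ldots,[e_m])$: its stabiliser consists of matrices $A\in\GL{n}$ with $Ae_i\in\mathbb K^\times e_i$ for $i\in [m]$, which in block form of sizes $m$ and $n-m$ is the group of invertible upper block-triangular matrices whose first diagonal block is itself diagonal. A direct count gives
\[
\dim\mathrm{Stab}(x)=m+m(n-m)+(n-m)^2 = n^2 - m(n-1),
\]
so the orbit through $x$ has dimension exactly $m(n-1)=\dim X$ and is open in $X$. When $m=n+1$, take the standard projective frame $x=([e_1],\ldots,[e_n],[e_1+\cdots+e_n])$: the first $n$ coordinates already force any $A\in\mathrm{Stab}(x)$ to be diagonal, and the last coordinate then forces all diagonal entries to coincide, so $\mathrm{Stab}(x)=\mathbb K^\times I_n$ and the orbit has dimension $n^2-1 = (n-1)(n+1) = m(n-1)$, hence is open.

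The work is essentially an elementary stabiliser calculation; the only point requiring any care is the case $m=n+1$, where one must use both that the first $n$ chosen points give a basis (to force diagonality) and that the $(n+1)$-th point has all nonzero coordinates in that basis (to force scalarity). Everything else is linear algebra and the dimension formula for orbits.
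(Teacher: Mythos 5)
Your proposal is correct and follows essentially the same route as the paper: the same representatives $([e_1],\ldots,[e_m])$ for $m\leq n$ and the projective frame $([e_1],\ldots,[e_n],[e_1+\cdots+e_n])$ for $m=n+1$, with the same stabiliser dimension counts, and the same obstruction for $m\geq n+2$ via the trivially acting centre bounding every orbit dimension by $n^2-1<m(n-1)$. No gaps; the stabiliser computation $m+m(n-m)+(n-m)^2=n^2-m(n-1)$ matches the paper's count.
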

\begin{proof}
\been
\item 
If $n\geq m$, then we can take an element $[e_i]_{i=1}^m\in (\mathbb P^{n-1})^m$. 
Its stabiliser is 
\[
\left\{\left.\left(\bear{cc|c}  \multicolumn{2}{c|}{\multirow{2}{*}{\scalebox{1.5}{$C$}}} & \multirow{2}{*}{\scalebox{1.2}{$\ast$}} \\ && \\  \hline \multicolumn{2}{c|}{\scalebox{1.2}{$0$}} & \scalebox{1.2}{$\ast$}  \enar\right)\right| \;C=\diag{c_1,c_2,\ldots,c_m}\in\GL{m} \;\right\}\subset\GL{n},  
\]
and $\diag{GL_n}\cdot[e_i]_{i=1}^m$ is the open orbit since
$
\dim\left(\diag{GL_n}\cdot[e_i]_{i=1}^m\right)=\dim\GL{n}-(m+n(n-m))=(n-1)m=\dim (\mathbb P^{n-1})^m. 
$
\item 
If $n+1=m$, then we can take a element $v=[v_i]_{i=1}^{n+1}\in (\mathbb P^{n-1})^{n+1}$ where $v_i:=e_i$ for $1\leq i\leq n$ and $v_{n+1}:=\sum_{k=1}^ne_k$.
If $g\in \GL{n}$ stabilises $[v_i]=[e_i]$ for $1\leq i\leq n$, then it is a diagonal matrix, and if it stabilises $[v_{n+1}]= [\sum_{k=1}^ne_k]$, then it is actually a scalar matrix. 
Hence $\diag{GL_n}\cdot v$ is the open orbit since  
$
\dim\left(\diag{GL_n}\cdot v\right)=\dim\GL{n}-1=(n-1)(n+1)=\dim (\mathbb P^{n-1})^{n+1}. 
$
\item 
Let $n+2\leq m$. 
Remark that the centre of $GL_n$ acts trivially on $\mathbb P^{n-1}$, and the dimension of any $\diag{GL_n}$-orbit in $(\mathbb P^{n-1})^m$ is less than $n^2$. Hence there is no open orbit since 
$
\dim (\mathbb P^{n-1})^m=(n-1)m>(n-1)(n+1)=n^2-1. 
$
\enen
\end{proof}
\begin{thm} \label{thm:finite} 
Let $n\geq 2$ and $m$ be positive integers. 
There are only finitely many $\diag{\GL{n}}$-orbits on 
$(\pr{n-1})^m$ if and only if $m\leq 3$. 
\end{thm}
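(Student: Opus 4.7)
The plan is to establish the two directions of the equivalence separately: for $m\leq 3$, I will exhibit a finite list of $\diag{\GL{n}}$-orbit representatives by a case analysis; for $m\geq 4$, I will use the classical cross-ratio of four collinear points in $\pr{n-1}$ as a continuous $\diag{\GL{n}}$-invariant to produce an infinite family of orbits, after first reducing to $m=4$.

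For the finiteness direction, the cases $m=1$ and $m=2$ are immediate, giving one and two orbits respectively (the single pair-coincidence decides the orbit in the $m=2$ case). For $m=3$, I would stratify $([v_i])_{i=1}^3\in(\pr{n-1})^3$ by the pair $(d,\mathcal C)$, where $d:=\dim\langle v_1,v_2,v_3\rangle\in\{1,2,3\}$ and $\mathcal C$ records which of the projective points $[v_i]$ coincide. Using the fact that $\GL{n}$ acts transitively on ordered bases of any fixed $d$-dimensional subspace of $\mathbb K^n$, together with the $3$-transitivity of the $\GL{2}$-action on a projective line, each nonempty stratum reduces to a single orbit admitting an explicit representative (such as $([e_1],[e_2],[e_1+e_2])$ for the stratum of three distinct collinear points). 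A quick count bounds the number of orbits by six, independent of $n\geq 2$.

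For the converse, it suffices to treat $m=4$: the coordinate projection $(\pr{n-1})^m\to(\pr{n-1})^4$ is surjective and $\diag{\GL{n}}$-equivariant, so the induced map of orbit sets is surjective, and infiniteness of the target forces infiniteness of the source. For $m=4$, I would consider the one-parameter family
\[
u_\lambda := \bigl([e_1],\,[e_2],\,[e_1+e_2],\,[\lambda e_1+e_2]\bigr),\qquad \lambda\in\mathbb K\setminus\{0,1\},
\]
whose four entries are distinct points on the projective line $\mathbb P(\langle e_1,e_2\rangle)\cong\pr{1}$. Any $g\in\GL{n}$ carries this line to another line and restricts to a projective linear isomorphism between them, so the cross-ratio of four collinear projective points descends to a $\diag{\GL{n}}$-invariant. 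The cross-ratio of $u_\lambda$ equals $\lambda$ (up to convention), so distinct values of $\lambda$ produce distinct orbits, giving infinitely many.

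The main obstacle will be the bookkeeping in the $m=3$ case: one has to verify exhaustiveness of the stratification and check that each stratum really collapses to a single orbit via the transitivity statements, handling $n=2$ (where $d=3$ is impossible) separately from $n\geq 3$. The $m\geq 4$ direction is conceptually simpler, resting on the observation that the cross-ratio is a well-defined $\diag{\GL{n}}$-invariant on collinear configurations combined with the equivariant projection argument that lets us absorb the extra $m-4$ coordinates.
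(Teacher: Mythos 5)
Your proposal is correct and takes essentially the same route as the paper: a case-by-case/stratification argument for $m\leq 3$ (coincidence pattern plus dimension of the span, with explicit representatives such as $([e_1],[e_2],[e_1+e_2])$ and $([e_1],[e_2],[e_3])$), and a one-parameter family of four collinear points for $m\geq 4$. The only cosmetic differences are that the paper settles $m=2$ via the Bruhat decomposition $Q_n\backslash GL_n/Q_n$, and for $m\geq 4$ it works with the padded representative $[e_1,e_2,e_1+e_2,p,\ldots,p]$ and a direct stabiliser computation (the restriction of $g$ to $\langle e_1,e_2\rangle$ is scalar) instead of your equivariant projection to $m=4$ plus the cross-ratio invariant, which encodes the same mechanism.
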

\begin{proof}
\been
\item  
The action of $GL_n$ on $\mathbb P^{n-1}$ is transitive, hence there is only one orbit. 
\item
From Bruhat decomposition, we have $\diag{GL_n}\backslash (\mathbb P^{n-1})^2=\diag{GL_n}\backslash (GL_n/Q_n)^2\cong Q_n\backslash GL_n/Q_n$ where $Q_n$ is the maximal parabolic subgroup of $GL_n$ with the Levi subgroup $GL_1\times GL_{n-1}$. It has a one to one correspondence with $\mathcal S_{n-1}\backslash \mathcal S_n/\mathcal S_{n-1}$, which is a two point set. 
\item 
If $m=3$, we have
\bealign
\diag{GL_n}\backslash (\mathbb P^{n-1})^3&= \diag{GL_n}\backslash\left\{[v_1, v_2, v_3]\in (\mathbb P^{n-1})^3\left|[v_1]\neq[v_2]\neq[v_3]\neq[v_1]\right.\right\}  \notag \\
&\;\;\;\;\amalg\bigcup_{1\leq i<j\leq 3}\diag{GL_n}\backslash\left\{[v_1,v_2,v_3]\in (\mathbb P^{n-1})^3\left|[v_i]=[v_j]\right.\right\}.  \label{eq:xn3decomp}
\end{align}
For $1\leq i< j\leq3$, we have 
\beeq \label{eq:xn3decomp1}
\diag{GL_n}\backslash\left\{[v_1,v_2,v_3]\in (\mathbb P^{n-1})^3\left|[v_i]=[v_j]\right.\right\}\cong \diag{GL_n}\backslash (\mathbb P^{n-1})^2, 
\eneq
hence it is finite from the previous case. 
Furthermore, we have
\bealign
&\left\{[v_1,v_2,v_3]\in (\mathbb P^{n-1})^3\left|[v_1]\neq[v_2]\neq[v_3]\neq[v_1]\right.\right\} \notag \\
&\;\;=\diag{GL_n}\cdot [e_1,e_2,e_1+e_2]\ \ \bigl(\ \amalg\ \diag{GL_n}\cdot [e_1,e_2,e_3]\bigr). \label{eq:xn3decomp2}
\end{align}
Remark that the inside of the bracket occurs only if $n\geq 3$. 
Combining (\ref{eq:xn3decomp}), (\ref{eq:xn3decomp1}), and (\ref{eq:xn3decomp2}), $\diag{GL_n}\backslash (\mathbb P^{n-1})^3$ is finite. 
\item 
If $m\geq 4$, then we can take $v(p):=[e_1,e_2,e_1+e_2, p ,p, \ldots,p]\in (\mathbb P^{n-1})^m$ for $p\in\pr{1}$. 
For $p,q\in\pr{1}$, assume that $g\in\GL{n}$ satisfies $\diag{g}\cdot v(p)=v(q)$. 
Since $g$ fixes $[e_1], [e_2]$, and $[e_1+e_2]$, the restricted linear map $g|_{\langle e_1,e_2\rangle}$ is a scalar action, hence $q=g\cdot p=p$, and $v(p)=v(q)$. 
Hence $\{\diag{GL_n}\cdot v(p)\}_{p\in\pr{1}}$ is a distinct family of orbits with infinitely many elements. 
\enen
\end{proof}

Combining these results, we completed the proof of Theorem \ref{thm:openandfinite}.

\section{General structures of orbits on multiple projective spaces}
\label{general}

In Section \ref{openandfinite}, we observed the existence of open orbits and finiteness of orbits on 
$(\mathbb P^{n-1})^m$ under the diagonal action of $\GL{n}$. 
In this section, we consider further general properties of diagonal $GL_n$-orbits on 
$(\pr{n-1})^m$.


\subsection{Indecomposable splitting of configuration spaces}

To simplify the description of ${GL_n}$-orbit decomposition of $(\mathbb P^{n-1})^m$, we introduce some ${GL_n}$-invariant subsets of $(\mathbb P^{n-1})^m$ and consider the decompositions into them. 

For elements of $(\mathbb P^{n-1})^m$, we can define the notion of indecomposability as follows: 
\begin{df} \label{df:indecomposable} 
For $v=[v_i]_{i=1}^m\in (\mathbb P^{n-1})^m$, we say $v$ is decomposable if there exists a pair $\emptyset\neq I,J\subset[m]$ such that $I\amalg J=[m]$ and $\langle v_i\rangle_{i\in I}\cap \langle v_j\rangle_{j\in J}=\bm 0$. Conversely, if $v$ does not have such decomposition, we say it to be indecomposable. 
\end{df} 

It is equivalent to the notion of indecomposability as objects in the flag category, and with this notion, we can consider indecomposable splittings of elements in $(\mathbb P^{n-1})^m$ as follows: 
\begin{df}\label{df:splitting} \begin{enumerate}
\item Define a set $\mathcal P_{m}$ as bellow:
\[\mathcal P_{m}:=\left\{\{(I_k,r_k)\}_{k=1}^l\left|\ \coprod_{k=1}^lI_k=[m],\ I_k\neq\emptyset,\ r_k\in\mathbb N\right.\right\}.\]
\item Define the map $\varpi\colon (\mathbb P^{n-1})^m\to \mathcal P_{m},\  v\mapsto \{(I_k,r_k)\}_{k=1}^l$ where 
\begin{enumerate}
\item $\langle v_i\rangle_{i=1}^m=\bigoplus_{k=1}^l \langle v_i\rangle_{i\in I_k}$ and  $r_k=\dim\langle v_i\rangle_{i\in I_k}$, 
\item $[v_i]_{i\in I_k}\in (\mathbb P^{n-1})^{\#I_k}$ is indecomposable. 
\end{enumerate}
\end{enumerate}
\end{df}
The map $\varpi$ is well-defined since it is generally known that indecomposable splittings of flags are unique up to isomorphisms. However we can also check it elementarily in this case. Let $\{(I_k,r_k)\}_{k=1}^l$ and $\{(J_k,s_k)\}_{k=1}^{l'}\in\mathcal P_m$ satisfy the two conditions i) and ii) in Definition \ref{df:splitting} for $[v_i]_{i=1}^m\in (\mathbb P^{n-1})^m$. If $I_k\cap J_{k'}\neq\emptyset$ and $I_k\setminus J_{k'}\neq\emptyset$, then $\langle v_i\rangle_{i\in I_k\cap J_{k'}}\cap\langle v_i\rangle_{i\in I_k\setminus J_{k'}}=\bm 0$ from i), and it contradicts to the indecomposability of $[v_i]_{i\in I_k}$ assumed in ii). Hence either $I_k\cap J_{k'}=\emptyset$ or $I_k=J_{k'}$ holds. 
\begin{exmp} For $(n,m)=(3,4)$, we have 
\begin{align*}
\varpi([e_1,e_1,e_2,e_3])&=\left\{(\{1,2\},1), (\{3\},1),(\{4\}, 1)\right\}, \\
\varpi([e_1,e_2,e_3,e_2+e_3])&=\left\{(\{1\},1),(\{2,3,4\},2)\right\}.
\end{align*}
\end{exmp}
On the other hand, in this article, we observe mainly the following map: 
\begin{equation} \label{eq:rankmatrix}
\begin{array}{cccc}
\pi\colon & (\mathbb P^{n-1})^m & \longrightarrow & Map(2^{[m]},\mathbb N) \\
& \rotatebox{90}{$\in$} &&\rotatebox{90}{$\in$} \\
& [v_i]_{i=1}^m &\mapsto & \left(I\mapsto \dim\langle v_i\rangle_{i\in I}\right)
\end{array}
\end{equation}
We can characterise the correspondence between $\varpi$ and $\pi$ as bellow: 
\begin{lemma} \label{thm:rho} For the maps $\varpi$ and $\pi$ defined in Definition \ref{df:splitting} and (\ref{eq:rankmatrix}), there exist following maps satisfying the diagram bellow. 
\[\xymatrix{
& Map(2^{[m]},\mathbb N) \ar@{.>>}[r]^-{\rho} &\mathcal P_{m} \\
GL_n\backslash (\mathbb P^{n-1})^m\ar@{.>>}[r]^-{\tilde\pi}& \mathrm{Image}(\pi) \ar@<-0.3ex>@{^{(}->}[u] \ar@{.>>}[r]^-{\left.\rho\right|} & \mathrm{Image}(\varpi) \ar@<-0.3ex>@{^{(}->}[u]\\
(\mathbb P^{n-1})^m\ar@{->>}[u]\ar@{->>}[ru]_-{\pi} \ar@{->>}[rru]_-{\varpi}
}\]
\end{lemma}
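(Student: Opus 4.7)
My plan is to reduce everything to a single observation: the conditions defining $\varpi(v)$ in Definition \ref{df:splitting} depend on $v$ only through $\pi(v)$. Once that is established, the three horizontal arrows and the commutativity of the diagram fall out formally.

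First, I would verify that $\pi$ is $\GL{n}$-invariant, which is immediate because any $g\in\GL{n}$ restricts to a linear isomorphism $\langle v_i\rangle_{i\in I}\stackrel{\sim}{\to}\langle gv_i\rangle_{i\in I}$ and therefore preserves $\dim\langle v_i\rangle_{i\in I}$ for every $I\subseteq[m]$. This provides the quotient arrow on the left of the diagram and the factorisation $\tilde\pi\colon\diag{\GL{n}}\backslash(\pr{n-1})^m\twoheadrightarrow\mathrm{Image}(\pi)$.

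The key step will be the identity
\[
\dim\bigl(\langle v_i\rangle_{i\in I'}\cap\langle v_i\rangle_{i\in I''}\bigr)=\varphi(I')+\varphi(I'')-\varphi(I'\sqcup I''),
\]
valid for disjoint $I',I''\subseteq[m]$ with $\varphi:=\pi(v)$, obtained from the usual dimension formula for the sum of two subspaces. In particular $\langle v_i\rangle_{i\in I'}\cap\langle v_i\rangle_{i\in I''}=\bm 0$ if and only if $\varphi(I'\sqcup I'')=\varphi(I')+\varphi(I'')$. Consequently both conditions i) and ii) of Definition \ref{df:splitting} for a candidate splitting $\{(I_k,r_k)\}_{k=1}^l$ translate to conditions on $\varphi$ alone: namely $r_k=\varphi(I_k)$ with $\varphi([m])=\sum_k\varphi(I_k)$ for i), and the non-existence of a proper binary splitting $I_k=I'\sqcup I''$ satisfying $\varphi(I_k)=\varphi(I')+\varphi(I'')$ for ii).

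Using this, I will define $\rho$ by the same recipe: send $\varphi$ to $\{(I_k,\varphi(I_k))\}_{k=1}^l$ for any partition $[m]=\coprod I_k$ satisfying the two translated conditions. This yields a partial map $Map(2^{[m]},\N)\dashrightarrow\mathcal P_m$; it becomes total on $\mathrm{Image}(\pi)$ since any $v$ with $\pi(v)=\varphi$ produces such a partition via $\varpi(v)$, and well-defined there by repeating the argument after Definition \ref{df:splitting} verbatim in the translated language: if two such partitions $\{I_k\}$ and $\{J_{k'}\}$ differ, some $I_k\cap J_{k'}$ is a proper non-empty subset of $I_k$; the direct-sum structure attached to $\{J_{k'}\}$ gives $\langle v_i\rangle_{i\in I_k\cap J_{k'}}\cap\langle v_i\rangle_{i\in I_k\setminus J_{k'}}=\bm 0$; and by the main identity this forces $\varphi(I_k)=\varphi(I_k\cap J_{k'})+\varphi(I_k\setminus J_{k'})$, contradicting ii). Once this is in place, $\rho\circ\pi=\varpi$ holds by construction, the surjectivity of $\rho|\colon\mathrm{Image}(\pi)\twoheadrightarrow\mathrm{Image}(\varpi)$ is automatic, and the rest of the diagram is assembled trivially. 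The main obstacle is precisely this well-definedness of $\rho$ on $\mathrm{Image}(\pi)$, but once decomposability has been re-expressed in terms of $\varphi$ the uniqueness becomes a direct translation of the argument already in the text.
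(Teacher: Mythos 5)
Your argument is correct and follows essentially the same route as the paper: translate the defining conditions of $\varpi$ into conditions on $\varphi=\pi(v)$ alone, define $\rho$ by that recipe, check uniqueness of the resulting partition, and conclude $\rho\circ\pi=\varpi$. The only (harmless) deviations are that you phrase decomposability via the top-level equality $\varphi(I'\sqcup I'')=\varphi(I')+\varphi(I'')$ and verify well-definedness by choosing a representative $v\in\pi^{-1}(\varphi)$, whereas the paper imposes the splitting identity for all $I\subset[m]$ and argues purely combinatorially on $\varphi$ (so its $\rho$ is defined on all of $Map(2^{[m]},\mathbb N)$, not just on $\mathrm{Image}(\pi)$); on $\mathrm{Image}(\pi)$ the two formulations agree, which is all the lemma needs.
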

\begin{proof}
Since the map $\pi$ is $GL_n$-invariant, we can define the map $\tilde\pi$ clearly. 

Now for a map $\varphi\colon 2^{[m]}\to\mathbb N$, we define it to be decomposable if there exists a partition $I_1\amalg I_2=[m]$, $I_1,I_2\neq\emptyset$ which satisfies $\varphi(I)=\varphi(I\cap I_1)+\varphi(I\cap I_2)$ for all $I\subset [m]$. Then we can define the map $\rho\colon Map(2^{[m]},\mathbb N)\to \mathcal P_m$, $\varphi\mapsto \{(I_k,r_k)\}_{k=1}^m$ where
\begin{enumerate}
\item $\varphi(I)=\sum_{k=1}^l \varphi(I\cap I_k)$ for $I\subset [m]$, and $r_k=\varphi(I_k)$, 
\item the restricted map $\left.\varphi\right|_{I_k}$ is indecomposable for each $1\leq k\leq l$. 
\end{enumerate}
The map $\rho$ is well-defined. Indeed, let $\{(I_k,r_k)\}_{k=1}^l$ and $\{(J_k,s_k)\}_{k=1}^{l'}\in\mathcal P_m$ satisfy the conditions (1) and (2) for $\varphi\colon 2^{[m]}\to\mathbb N$. If $I_k\cap J_{k'}\neq\emptyset$ and $I_k\setminus J_{k'}\neq\emptyset$, then for $I\subset I_k$, 
\begin{align*}
\varphi(I)&=\sum_{k''=1}^{l'}\varphi(J_{k''}\cap I)=\varphi(J_{k'}\cap I)+\sum_{k''=1}^{l'}\varphi(J_{k''}\cap I\setminus J_{k'})=\varphi(J_{k'}\cap I)+\varphi(I\setminus J_{k'}) \\
&= \varphi(I_k\cap J_{k'}\cap I)+\varphi(I_k\setminus J_{k'}\cap I)\end{align*}
from (1), and it contradicts to the indecomposability of $\left.\varphi\right|_{I_k}$ in (2). Hence either $I_k\cap J_{k'}=\emptyset$ or $I_k=J_{k'}$ holds, which concludes the well-definedness of $\rho$. 

Now for $\varpi([v_i]_{i=1}^m)=\{(I_k,r_k)\}_{k=1}^m$, we have
\begin{enumerate}
\item From $\langle v_i\rangle_{i=1}^m=\bigoplus_{k=1}^l \langle v_i\rangle_{i\in I_k}$, we have $\dim\langle v_i\rangle_{i\in I}=\sum_{i=1}^l \langle v_i\rangle_{i\in I_k\cap I}$. 
\item If $\emptyset\neq J,J'$ and $J\amalg J'=I_k$, then from the indecomposability of $[v_i]_{i\in I_k}$, we have $\langle v_i\rangle_{i\in I_k\cap J}\cap \langle v_i\rangle_{i\in I_k\cap J'}\neq \bm 0$. Hence $\dim\langle v_i\rangle_{i\in I_k}< \dim\langle v_i\rangle_{i\in I_k\cap J}+\dim \langle v_i\rangle_{i\in I_k\cap J'}$. 
\end{enumerate}
Hence we have shown that $\rho\circ\pi=\varpi$. 
\end{proof}

\subsection{Some typical decompositions of multiple projective spaces
}
\label{typical}

For the map $\varpi$ defined in Definition \ref{df:splitting}, there are some remarkable properties. 
\begin{lemma} \label{thm:imagevarpi} For the map $\varpi\colon (\mathbb P^{n-1})^m\to\mathcal P_{m}$ defined in Definition \ref{df:splitting}, an element $\left\{(I_k,r_k)\right\}_{k=1}^l$ of $\mathcal P_{m}$ is contained in the image of $\varpi$ if and only if the followings are satisfied: 
\begin{enumerate}
\item $r_k=1$, or $2\leq r_k\leq \#I_k-1$ for each $1\leq k\leq l$, 
\item $\sum_{k=1}^lr_k\leq  n$.
\end{enumerate}
\end{lemma}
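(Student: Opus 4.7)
Proof plan for Lemma \ref{thm:imagevarpi}:

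The plan is to prove both directions by direct arguments: the necessity of (1) and (2) follows from elementary linear algebra applied to the defining properties of $\varpi$, and the sufficiency is proved by explicit construction of a representative configuration in each $V_k$ summand.

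For the necessity, suppose $\{(I_k, r_k)\}_{k=1}^l = \varpi([v_i]_{i=1}^m)$. Condition (2) is immediate: by Definition \ref{df:splitting}(a), the spans $\langle v_i\rangle_{i\in I_k}$ form a direct sum inside $\mathbb{K}^n$, hence $\sum_k r_k = \dim \bigoplus_{k=1}^l \langle v_i\rangle_{i\in I_k} \leq n$. For (1), fix $k$. Since $r_k = \dim\langle v_i\rangle_{i\in I_k}$ is the dimension of a span of $\#I_k$ vectors, $r_k\leq \#I_k$. If $\#I_k=1$, then $r_k=1$ automatically. Otherwise, I rule out the case $r_k = \#I_k \geq 2$: in that case $\{v_i\}_{i\in I_k}$ would be linearly independent, so choosing any $i_0\in I_k$ and setting $J=\{i_0\}$, $J'=I_k\setminus\{i_0\}$ would yield $\langle v_{i_0}\rangle \cap \langle v_i\rangle_{i\in J'}=\bm 0$, contradicting the indecomposability required by Definition \ref{df:splitting}(b). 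Hence either $r_k=1$ or $2\leq r_k\leq \#I_k-1$.

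For the sufficiency, given $\{(I_k, r_k)\}_{k=1}^l \in \mathcal{P}_m$ satisfying (1) and (2), I will exhibit $[v_i]_{i=1}^m$ with $\varpi([v_i]) = \{(I_k,r_k)\}$. By (2) I can choose mutually transverse subspaces $V_1,\ldots,V_l\subset\mathbb{K}^n$ with $\dim V_k = r_k$ and $V_1\oplus\cdots\oplus V_l \subset \mathbb{K}^n$. In each $V_k$ I construct an indecomposable configuration on $\#I_k$ points as follows: if $r_k=1$, assign every $v_i$ ($i\in I_k$) to the same fixed nonzero vector in $V_k$; if $r_k\geq 2$, fix a basis $f_1,\ldots,f_{r_k}$ of $V_k$, assign the first $r_k$ indices of $I_k$ to $f_1,\ldots,f_{r_k}$ respectively, and assign all remaining $\#I_k - r_k \geq 1$ indices to $f_1+\cdots+f_{r_k}$. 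In either case, $\langle v_i\rangle_{i\in I_k} = V_k$, so Definition \ref{df:splitting}(a) holds.

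The remaining verification, which is the only nontrivial step, is indecomposability of each block $[v_i]_{i\in I_k}$. When $r_k=1$ this is trivial since any nonempty partition $J\amalg J'=I_k$ yields $\langle v_i\rangle_{i\in J}\cap \langle v_i\rangle_{i\in J'}=V_k\neq \bm 0$. When $r_k\geq 2$, I will analyse an arbitrary decomposition $J\amalg J'=I_k$: if both $J$ and $J'$ contain a ``sum vector'' then $f_1+\cdots+f_{r_k}$ lies in the intersection; if all sum vectors are in $J$, then letting $S_J, S_{J'}\subset\{1,\ldots,r_k\}$ index the basis vectors in $J, J'$, the identity $\sum_{s\in S_{J'}} f_s = (f_1+\cdots+f_{r_k}) - \sum_{s\in S_J} f_s$ exhibits a nonzero element of the intersection whenever both $S_J, S_{J'}$ are nonempty, and the remaining degenerate cases ($S_J=\emptyset$ or $S_{J'}=\emptyset$) are handled separately by a direct check. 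The main (but still routine) obstacle is bookkeeping this case analysis cleanly; once it is done, Definition \ref{df:splitting}(b) is verified and the lemma follows.
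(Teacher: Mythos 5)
Your proof is correct and follows essentially the same route as the paper: the same linear-algebra argument for necessity (a direct sum bounds $\sum_k r_k$ by $n$, and $r_k=\#I_k\geq 2$ would force linear independence and hence decomposability), and the same explicit construction for sufficiency ($r_k$ basis vectors plus the all-ones sum vector in each block). The only difference is that you spell out the indecomposability of each block by an explicit case analysis where the paper simply invokes that the first $r_k+1$ vectors are in general position, and your case analysis does close correctly.
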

\begin{proof}
If $\varpi([v_i]_{i=1}^m)=\left\{(I_k,r_k)\right\}_{k=1}^l$, since $\langle v_i\rangle_{i=1}^m=\bigoplus_{k=1}^l\langle v_i\rangle_{i\in I_k}$ and $r_k=\dim\langle v_i\rangle_{i\in I_k}$, the second condition holds obviously. 
Furthermore, let us assume that the first condition fails, in other words, assume that $2,\#I_k\leq r_k$ is satisfied for some $k$. Since $\#I_k<r_k=\dim\langle v_i\rangle_{i\in I_k}$ can not occur, the equiality must hold. Hence $\{v_i\}_{i\in I_k}$ is linearly independent, which contradicts to the indecomposability. 

Conversely, if the two conditions are satisfied for $\{(I_k,r_k)\}_{k=1}^l\in\mathcal P_m$, we define as follows:
\begin{enumerate}
\item $R(k):=\sum_{k'\leq k}r_{k'}$ for $0\leq k\leq l$. In particular, $0=R(0)<R(1)<R(2)<\cdots R(l)\leq n$. 
\item For $I_k=\{i(1)<i(2)<\cdots<i({\#I_k})\}$, set 
\[v_{i(j)}:=\begin{cases} e_{R(k-1)+j} & 1\leq j\leq r_k, \\  \sum_{j'=1}^{r_k}e_{R(k-1)+j'} & r_k+1\leq j  \leq \#I_k. \end{cases}\]
\end{enumerate}
Then we have 
$\langle v_i\rangle_{i=1}^m=\langle e_j\rangle_{j=1}^{R(l)}=\bigoplus_{k=1}^l \langle e_{R(k-1)+j}\rangle _{j=1}^{r_k}=\bigoplus_{k=1}^l\langle v_i\rangle_{i\in I_k}$, 
and each $\langle v_i\rangle_{i\in I_k}$ is indecomposable since the first $r_k+1$ vectors are in a general position in the $r_k$-dimensional space. Hence $\varpi([v_i]_{i=1}^m)=\{(I_k,r_k)\}_{k=1}^l$. 
\end{proof}

\begin{lemma} \label{thm:rhobij} If we define the subset of $\mathcal P_m$ by 
\[\mathcal P_{n,m}':=\left\{\{(I_k,r_k)\}_{k=1}^m\in\mathrm{Image}(\varpi)\subset\mathcal P_m\left|\ r_k=1,\ \textrm{or}\ 2\leq r_k=\#I_k-1\ (1\leq k\leq l)\right.\right\}, \]
then for each element $\{(I_k,r_k)\}_{k=1}^l\in\mathcal P_{n,m}'$, the fibre $\varpi^{-1}(\left\{(I_k,r_k)\right\}_{k=1}^l)\subset (\mathbb P^{n-1})^m$ is a single $GL_n$-orbit. Furthermore, this orbit coincides with the fibre $\pi^{-1}(\varphi)$ where 
\[\varphi\colon 2^{[m]}\to\mathbb N\colon I\mapsto\sum_{k=1}^l \min\left\{ \#(I_k\cap I), r_k\right\}. \]
In particular, we have the following bijections. 
\[\xymatrix{
GL_n\backslash (\mathbb P^{n-1})^m \ar@{>>}@/^20pt/[rr]^-{\tilde\varpi}\ar@{>>}[r]^-{\tilde\pi}& \mathrm{Image}(\pi)  \ar@{>>}[r]^-{\left.\rho\right|} & \mathrm{Image}(\varpi) \\
GL_n\backslash\varpi^{-1}(\mathcal P'_{n,m}) \ar[r]^-{\simeq}\ar@<-0.3ex>@{^{(}->}[u]  & \left.\rho\right|^{-1}(\mathcal P'_{n,m}) \ar[r]^-{\simeq}\ar@<-0.3ex>@{^{(}->}[u] & \mathcal P'_{n,m} \ar@<-0.3ex>@{^{(}->}[u]
}\]
\end{lemma}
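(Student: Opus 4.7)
The plan is to split the lemma into two independent claims and then assemble the bijections from them. The first claim is that for each $\{(I_k,r_k)\}_{k=1}^l \in \mathcal P'_{n,m}$, the fibre $\varpi^{-1}(\{(I_k,r_k)\})$ is a single $GL_n$-orbit. The second claim is that this fibre equals $\pi^{-1}(\varphi)$ for $\varphi(I) = \sum_{k=1}^l \min\{\#(I_k\cap I), r_k\}$. The three bijections in the displayed diagram will then follow formally: the first claim shows that distinct orbits map to distinct fibres of $\varpi$, while the second identifies $\left.\rho\right|^{-1}(\mathcal P'_{n,m})$ with $\mathcal P'_{n,m}$ via the map $\varphi \mapsto \rho(\varphi)$.

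For the first claim, I would start from the direct sum decomposition $\langle v_i\rangle_{i=1}^m = \bigoplus_{k=1}^l \langle v_i\rangle_{i \in I_k}$ of Definition \ref{df:splitting}. Choosing a complement of $\bigoplus_k \langle v_i\rangle_{i\in I_k}$ in $\mathbb K^n$ and acting by a block-diagonal element of $GL_n$, the problem of matching two configurations in the fibre reduces to matching each indecomposable block $[v_i]_{i\in I_k}$ inside its own span under $GL(\langle v_i\rangle_{i\in I_k})$. When $r_k=1$, every $v_i$ with $i\in I_k$ is a nonzero scalar multiple of a fixed vector, so a single orbit exhausts the block. When $r_k = \#I_k - 1 \geq 2$, the key step is to upgrade indecomposability into a general position statement: if some $\emptyset\neq J_1 \subsetneq I_k$ had $\{v_i\}_{i\in J_1}$ linearly dependent, then setting $J_2 := I_k\setminus J_1$ would give $\dim\langle v_i\rangle_{i\in J_1} + \dim\langle v_i\rangle_{i\in J_2} \leq (\#J_1-1) + \#J_2 = r_k = \dim\langle v_i\rangle_{i\in I_k}$, forcing $\langle v_i\rangle_{i\in J_1} \cap \langle v_i\rangle_{i\in J_2} = \bm 0$ and contradicting indecomposability. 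Hence every proper nonempty subset of $\{v_i\}_{i\in I_k}$ is linearly independent, and after sending such a subset of size $r_k$ to the standard basis of $\mathbb K^{r_k}$ and rescaling via the residual diagonal torus, the remaining vector is forced to be $e_1+\cdots+e_{r_k}$, yielding a single orbit.

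For the second claim, I would first evaluate $\pi([v_i])(I)$ on the fibre. The direct sum yields $\dim\langle v_i\rangle_{i\in I} = \sum_k \dim\langle v_i\rangle_{i\in I \cap I_k}$, and the general position established above gives $\dim\langle v_i\rangle_{i\in I\cap I_k} = \min\{\#(I\cap I_k), r_k\}$ in each block, so $\varpi^{-1}(\{(I_k,r_k)\}) \subseteq \pi^{-1}(\varphi)$. For the reverse inclusion I would invoke the identity $\varpi = \rho\circ\pi$ from Lemma \ref{thm:rho}; it then suffices to verify $\rho(\varphi) = \{(I_k,r_k)\}$. The decomposition identity $\varphi(I) = \sum_k \varphi(I\cap I_k)$ together with $\varphi(I_k)=r_k$ is immediate from the min-sum formula, and the indecomposability of each restriction $\varphi|_{I_k}$ reduces to checking that for any nontrivial splitting $I_k = J_1\sqcup J_2$, the equality $r_k = \min\{\#J_1, r_k\} + \min\{\#J_2, r_k\}$ fails. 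This is a direct computation in both extreme regimes $r_k=1$ and $r_k=\#I_k-1$.

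The main technical obstacle I anticipate is the translation of indecomposability into linear independence of every proper subset in the block with $r_k = \#I_k - 1$; everything else is bookkeeping with the min-sum function and assembly of the bijections using $\tilde\pi$ and $\left.\rho\right|$ from Lemma \ref{thm:rho} together with the image characterisation in Lemma \ref{thm:imagevarpi}.
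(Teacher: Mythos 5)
Your proposal is correct and follows essentially the same route as the paper: indecomposability is converted by a dimension count into the statement that every $r_k$ of the vectors in a block are linearly independent, each block is then normalised to the standard representative $(e_1,\ldots,e_{r_k},e_1+\cdots+e_{r_k})$ (the nonvanishing of the coefficients of the extra vector, which the paper checks explicitly, is indeed forced by your general-position claim), the forward inclusion is the same min-sum computation, and the reverse inclusion uses $\varpi=\rho\circ\pi$ exactly as in the paper. The only cosmetic difference is that you verify $\rho(\varphi)=\{(I_k,r_k)\}_{k=1}^l$ by a direct check of the two defining conditions of $\rho$, where the paper leaves this step implicit.
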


\begin{proof}
We define $v_1,\ldots,v_m$ as in Lemma \ref{thm:imagevarpi} for $\{(I_k,r_k)\}_{k=1}^l\in\mathcal P_{n,m}'$. Now let us consider an element $[w_i]_{i=1}^m\in\varpi^{-1}(\{(I_k,r_k)\}_{k=1}^l)$. 
\begin{enumerate}
\item If $r_k=1$, there is an linear isomorphism $f_k$ between $1$-dimensional subspaces $\langle e_{R(k)}\rangle=\langle v_i\rangle_{i\in I_k}$ and $\langle w_i\rangle_{i\in I_k}$. In particular, $[f_k(w_i)]=[v_i]$ for all $i\in I_k$. 
\item For the case $2\leq r_k=\#I_k-1$, if $r_k$-tuple of $\{w_i\}_{i\in I_k}$ is \emph{not} linearly independent, then it spans strictly less than $r_k$-dimensional subspace of $\langle w_i\rangle_{i\in I_k}$, which does not contain the rest one vector. It contradicts to the indecomposability of $[w_i]_{i\in I_k}$. Hence all $r_k$-tuples of $\{w_i\}_{i\in I_k}$ must be linearly independent. 

From this observation, since $\{w_{i(1)}, w_{i(2)},\ldots,w_{i(r_k)}\}$ is a basis of the $r_k$-dimensional space $\langle w_i\rangle_{i\in I_k}$, there exists an linear combination $w_{i(r_k+1)}=\sum_{j=1}^{r_k}c_jw_{i(j)}$. If $c_1=0$, then $\{w_{i(j)}\}_{j=2}^{r_k+1}$ is linearly dependent, which contradicts to the observation above. Hence $c_1\neq 0$. Similarly we can prove $c_j\neq 0$ for all $1\leq j\leq r_k$. 

Now we can define a linear isomorphism $f_k$ between the $r_k$-dimensional spaces $\langle w_i\rangle_{i\in I_k}=\langle w_{i(j)}\rangle_{j=1}^{r_k}$ and $\langle v_i\rangle_{i\in I_k}=\langle e_{R(k-1)+j}\rangle_{j=1}^{r_k}$ by sending $c_jw_{i(j)}$ to $v_{i(j)}=e_{R(k-1)+j}$ for $1\leq j\leq r_k$, which leads that $f_k(w_{i(r_k+1)})=f_k\left(\sum_{j=1}^{r_k}c_jw_{i(j)}\right)=\sum_{j=1}^{r_k}e_{R(k-1)+j}=v_{i(r_k+1)}$. 
\end{enumerate}
Now, since $\langle w_i\rangle_{i=1}^m=\bigoplus_{k=1}^l\langle w_i\rangle_{i\in I_k}$ and  $\langle v_i\rangle_{i=1}^m=\bigoplus_{k=1}^l\langle v_i\rangle_{i\in I_k}$, by taking the direct product of these linear isomorphisms $f_k\colon \langle w_i\rangle_{i\in I_k}\simeq\langle v_i\rangle_{i\in I_k}$ and some linear isomorphism between the complement space, we have obtained an isomorphism $g\in GL_n$ such that $g\cdot[w_i]_{i=1}^m=[v_i]_{i=1}^m$. 

\bigskip
Now, since any $r_k$-tuple of $\{w_i\}_{i\in I_k}$ are linearly independent, $\dim \langle w_i\rangle_{i\in I_k\cap I}=\min\{\#(I_k\cap I),r_k\}$. Furthermore, since $\langle w_i\rangle_{i=1}^m=\bigoplus_{k=1}^l\langle w_i\rangle_{i\in I_k}$, we have
\[\dim\langle w_i\rangle_{i\in I}=\sum_{k=1}^l \dim\langle w_i\rangle_{i\in I_k\cap I}=\sum_{k=1}^l \min\{\#(I_k\cap I),r_k\}.\]
Hence $\varpi^{-1}(\{(I_k,r_k)\}_{k=1}^l)\subset \pi^{-1}(\varphi)$. On the hand, for an element $v\in \pi^{-1}(\rho)$, we have $\varpi(v)=\rho\circ\pi(v)=\rho(\varphi)=\{(I_k,r_k)\}_{k=1}^l$, which concludes the converse inclusion. 
\end{proof}

\section{Description of orbits}
\label{main}

From this section, we set $G$ to be the general linear group of the degree $3$ over algebraically closed field $\mathbb K$ with the characteristic $0$, and $Q$ be the maximal parabolic subgroup of $G$ such that $G/Q\cong\pr{2}$. 

In Section \ref{prbdecomp}, we determine the image of the map $\varpi$ and its subset $\mathcal P_{3,4}'$ defined in Definition \ref{df:splitting} and Lemma \ref{thm:rhobij}. Then we can obtain a description of $G$-orbit decomposition of $\varpi^{-1}(\mathcal S_{3,4})'\subset (\mathbb P^2)^4$ using $G$-invariant fibres of $\varpi$ and $\pi$ from Lemma \ref{thm:rhobij} (see Theorem \ref{thm:single}). 

Then in Section \ref{orbdecomp}, we describe the orbit decomposition of the $G$-invariant fibre of $\varpi$ on $\mathrm{Image}(\varpi)\setminus \mathcal P_{3,4}'$ to complete the description of all orbits (see Theorem \ref{thm:parameter}). 


\subsection{
Decomposition with the indecomposable splitting}
\label{prbdecomp}

First of all, we describe the image of the map $\varpi\colon (\mathbb P^2)^4\to \mathcal P_4$ defined in Definition \ref{df:splitting}: 
\begin{lemma}\label{thm:splittinglist} For $\{(I_k,r_k)\}_{k=1}^l\in \mathcal P_4$, it is contained in the image of $\varpi\colon (\mathbb P^2)^4\to\mathcal P_4$ (see Definition \ref{df:splitting}) if and only if the tuple $\{(\#I_k,r_k)\}_{k=1}^l$ is one of the followings:
\[\{(4,1)\},\ \{(3,1),(1,1)\},\{(2,1),(2,1)\},\ \{(4,2)\},\ \{(1,1),(1,1),(2,1)\},\ \{(3,2),(1,1)\},\ \{(4,3)\}.\]
\end{lemma}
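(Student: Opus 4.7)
The plan is to apply Lemma \ref{thm:imagevarpi} with $n=3$ and $m=4$ and simply enumerate all possibilities. By that lemma, a tuple $\{(I_k,r_k)\}_{k=1}^l\in\mathcal{P}_4$ lies in the image of $\varpi$ if and only if (a) for each $k$ one has $r_k=1$ or $2\le r_k\le \#I_k-1$, and (b) $\sum_{k=1}^l r_k\le 3$. Note that only the multiset $\{(\#I_k,r_k)\}_{k=1}^l$ enters these conditions, so it suffices to range over unordered partitions of the integer $4=\sum_k \#I_k$.

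First I would list the five integer partitions of $4$, namely $4$, $3+1$, $2+2$, $2+1+1$, and $1+1+1+1$. For each part of size $\#I_k$, condition (a) restricts $r_k$ to the set $\{1\}\cup\{2,\dots,\#I_k-1\}$: so a part of size $1$ or $2$ forces $r_k=1$, a part of size $3$ allows $r_k\in\{1,2\}$, and a part of size $4$ allows $r_k\in\{1,2,3\}$. Then I would impose condition (b) on each resulting tuple.

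Going case by case: the partition $4$ gives exactly $\{(4,1)\}$, $\{(4,2)\}$, $\{(4,3)\}$, all satisfying $\sum r_k\le 3$. The partition $3+1$ gives $\{(3,1),(1,1)\}$ and $\{(3,2),(1,1)\}$. The partition $2+2$ forces $r_k=1$ on each part, giving $\{(2,1),(2,1)\}$ with sum $2$. The partition $2+1+1$ forces all $r_k=1$, giving $\{(2,1),(1,1),(1,1)\}$ with sum $3$. Finally, the partition $1+1+1+1$ forces all $r_k=1$, but then $\sum r_k=4>3$ violates (b), so it is excluded. The list of seven surviving tuples matches the statement exactly.

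No step here is really an obstacle; the only thing to be careful about is not to overlook the condition (a) for parts of size $2$ (which excludes $r_k=2$, as $2\le r_k\le 1$ is vacuous) and to correctly apply the sum bound $n=3$ that kills the four-singleton partition. Since Lemma \ref{thm:imagevarpi} already provides both necessity and an explicit construction for sufficiency, the enumeration above establishes both directions of the equivalence simultaneously.
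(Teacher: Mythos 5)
Your proof is correct and follows essentially the same route as the paper: both invoke Lemma \ref{thm:imagevarpi} with $(n,m)=(3,4)$ and then enumerate the admissible tuples $\{(\#I_k,r_k)\}$, the only cosmetic difference being that you organize the case analysis by the integer partitions of $4$ while the paper organizes it by the number $l$ of parts. The resulting list of seven tuples agrees with the paper's.
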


\begin{proof}
From Lemma \ref{thm:imagevarpi}, the tuple $\{(I_k,r_k)\}_{k=1}^l\in \mathcal P_4$ is in the image of $\varpi\colon (\mathbb P^2)^4\to Map(2^{[4]},\mathbb N)$ if and only if $\sum_{k=1}^lr_k\leq 3$ and $r_k=1$ or $2\leq r_k\leq \#I_k-1$ for each $1\leq k\leq l$. Hence we can classify the image  of $\varpi$ as follows:
\begin{enumerate}
\item if $l=1$, then $\#I_1=4$ and the possibility of $r_1$ is equal to or less than $4-1=3$. hence the possibilities of $\{(\#I_k,r_k)\}_{k=1}^l$ are $\{(4,3)\}$, $\{(4,2)\}$, $\{(4,1)\}$. 
\item If $l=2$, the possibilities of partitions are $\{\#I_1,\#I_2\}=\{1,3\}$ or $\{2,2\}$. The first one corresponds to $\{(1,1),(3,2)\}$ or $\{(1,1),(3,1)\}$. The second one corresponds to $\{(2,1),(2,1)\}$. 
\item If $l=3$,  the possibilities of partitions are $\{\#I_1,\#I_2,\#I_2\}=\{1,1,2\}$ hence it corresponds to $\{(1,1),(1,1),(2,1)\}$.
\item If $l\geq 4$, then $\sum_{k=1}^lr_k\geq 4$ contradicts to the condition $\sum_{k=1}^lr_k\leq 3$.   
\end{enumerate}
\end{proof}

\begin{remark} From the definition of $\mathcal P_{3,4}'$ in Lemma \ref{thm:rhobij}, the tuple $\{(I_k,r_k)\}_{k=1}^l\in \mathrm{Image}(\varpi)$ is in $\mathcal P_{3,4}'$ if and only if $r_k=1$ or $2\leq r_k= \#I_k-1$ for each $1\leq k\leq l$. Hence from the classification in Lemma \ref{thm:splittinglist}, we have $\mathrm{Image}(\varpi)\setminus \mathcal P_{3,4}'=\{\{([4],2)\}\}$. 
\end{remark}

\begin{df} \label{df:map}
We define maps $\varphi[\ast]\in Map( 2^{[4]},\mathbb N)$ as in Table \ref{tab:vpsingle}, according to the correspondence between $\{(I_k,r_k)\}_{k=1}^l\in \mathcal P_{3,4}'$ classified in Lemma \ref{thm:splittinglist} and maps $2^{[4]}\to\mathbb N$, $I\mapsto \sum_{k=1}^l\min\{\#I_k,r_k\}$. In Table \ref{tab:vpsingle}, we set $\{i,j,k,l\}=[4]$. 
\begin{table}[h]
\centering
\small
\begin{tabular}{|c|c|c|c|}
\hline
$\{(\#I_k,r_k)\}_{k=1}^l$ & $\{I_k\}_{k=1}^l $ & $\left.\rho\right|^{-1}(\{(I_k,r_k)\}_{k=1}^l)$ &  representative of the orbit $\pi^{-1}(\varphi[\ast])$ \\
\hline \hline
$\{(4,1)\}$ & 
$\{\{1,2,3,4\}\}$ & $\displaystyle{\vp{2}}$ &  $[e_1,e_1,e_1,e_1]$ \\
\hline
$\{(1,1),(3,1)\}$& 
$\{\{i\},\{j,k,l\}\}$ & $\displaystyle{\vp{4;i}}$ & $[e_1,e_2,e_2,e_2]$ if $i=1$ \\
\hline
$\{(2,1),(2,1)\}$&
$\{\{i,j\},\{k,l\}\}$ & $\displaystyle{\vp{4;i,j}}$ & $[e_1,e_1,e_2,e_2]$ if $\{i,j\}=\{1,2\}$ \\
\hline
$\{(1,1),(1,1),(2,1)\}$&
$\{\{i\},\{j\},\{k,l\}\}$ & $\displaystyle{\vp{6;k,l}}$ & $[e_1,e_2,e_3,e_3]$ if $\{k,l\}=\{3,4\}$ \\
\hline
$\{(1,1),(3,2)\}$&
$\{\{i\},\{j,k,l\}\}$& $\displaystyle{\vp{7;i}}$ & $[e_1,e_2,e_3,e_2+e_3]$ if $i=1$ \\
\hline
$\{(4,3)\}$&
$\{\{1,2,3,4\}\}$ & $\displaystyle{\vp{8}}$ & $[e_1,e_2,e_3,e_1+e_2+e_3]$ \\
\hline
\end{tabular}
\caption{Definition of maps $\varphi[\ast]$}
\label{tab:vpsingle}
\end{table}
\end{df}
\begin{exmp}
For instance, the map $\varphi[7;4]$ corresponds to the partition $[4]=\{4\}\amalg [3]$, and we have $\varphi[7;i](I)=\min\{\#(I\cap\{4\}),1\}+\min\{\#(I\setminus\{4\}),2\}$. In other words, $[v_i]_{i=1}^4$ is in the fibre $\pi^{-1}\left(\varphi[7;1]\right)$ if and only if $\{v_2,v_2,v_3\}$ is in a general position in a $2$-dimensional space and $v_1$ is transversal to it. 
\end{exmp}
\berem
We can see that $\vp{4;i,j}=\vp{4;j,i}=\vp{4;k,l}=\vp{4;l,k}$, $\vp{5;i,j}=\vp{5;j,i},\;\vp{6;i,j}=\vp{6;j,i}$ where $\{1,2,3,4\}=\{i,j,k,l\}$. 
\enrem

From Lemma \ref{thm:rhobij}, remark that for all $\left.\rho\right|^{-1}(\{(I_k,r_k)\}_{k=1}^l)=\varphi[\ast]\in Map(2^{[4]},\mathbb N)$ defined in Table \ref{tab:vpsingle}, the fibre $\pi^{-1}(\varphi[\ast])\subset (\mathbb P^2)^4$ is a single $G$-orbit through the element $v\in \varpi^{-1}(\{(I_k,r_k)\}_{k=1}^l)$ defined in Lemma \ref{thm:imagevarpi} as listed in Table \ref{tab:vpsingle}. Furthermore, the dimension of the orbit through it is $\dim G-\dim \mathrm{Stab}_v=3\sum_{k=1}^lr_k-l$ since an isomorphism $g\in G$ stabilising $v$ has to be the scalar action on each $r_k$-dimensional space $\langle v_i\rangle_{i\in I_k}$. Hence, the dimension of each orbit $\pi^{-1}(\varphi[\ast])$ is denoted by the number before the semicolon. 

\begin{thm} \label{thm:single} For the map $\varpi\colon (\mathbb P^2)^4\to \mathcal P_4$ defined in Definition \ref{df:splitting}, each fibre of $\varpi$ at all elements in $\mathcal P_{3,4}'=\mathrm{Image}(\varpi)\setminus\{\{([4],2)\}\}$ is a single $G$-orbit coincides with the fibre $\pi^{-1}(\varphi[\ast])$ listed in Table \ref{tab:vpsingle}. Furthermore, representatives of these orbits are also listed in Table \ref{tab:vpsingle}.  
\end{thm}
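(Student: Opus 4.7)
The plan is to combine the structural results already proved in Lemmas \ref{thm:imagevarpi}, \ref{thm:rhobij}, and \ref{thm:splittinglist}, specialised to $(n,m)=(3,4)$; no genuinely new argument is required, only a careful identification of each entry of Table \ref{tab:vpsingle}.

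First I would use Lemma \ref{thm:splittinglist} to list the seven tuples $\{(\#I_k,r_k)\}_{k=1}^l$ that lie in $\mathrm{Image}(\varpi)$. Then I would inspect, tuple by tuple, whether the condition defining $\mathcal P_{3,4}'$ (namely $r_k=1$ or $2\leq r_k=\#I_k-1$ for every $k$) holds. The only exception is $\{(4,2)\}$, for which $r_1=2$ but $\#I_1-1=3$; every other tuple satisfies the condition since either $r_k=1$ or $(\#I_k,r_k)\in\{(3,2),(4,3)\}$. This establishes the equality $\mathcal P_{3,4}'=\mathrm{Image}(\varpi)\setminus\{\{([4],2)\}\}$ promised in the statement.

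Next, for each $\{(I_k,r_k)\}_{k=1}^l\in\mathcal P_{3,4}'$, I would invoke Lemma \ref{thm:rhobij} directly: the fibre $\varpi^{-1}(\{(I_k,r_k)\}_{k=1}^l)$ is a single $G$-orbit, it coincides with $\pi^{-1}(\varphi)$ where
\[
\varphi(I)=\sum_{k=1}^l\min\bigl\{\#(I_k\cap I),\,r_k\bigr\},
\]
and it is represented by the vector $v\in(\mathbb P^2)^4$ constructed explicitly in the proof of Lemma \ref{thm:imagevarpi}. The remaining work is a case-by-case evaluation of these two formulas for each of the six tuples in $\mathcal P_{3,4}'$, matching the output against the definition of $\varphi[\ast]$ in Definition \ref{df:map} and the last column of Table \ref{tab:vpsingle}. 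For instance, for $\{(I_1,r_1),(I_2,r_2)\}=\{(\{1\},1),(\{2,3,4\},2)\}$ one obtains $R(0)=0$, $R(1)=1$, $R(2)=3$, so $v_1=e_1$, $v_2=e_2$, $v_3=e_3$, and $v_4=e_2+e_3$, giving exactly the representative of $\vp{7;1}$. The other five cases are analogous.

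There is no real obstacle in this proof; the content has been done in the preceding lemmas. The only thing to be careful about is book-keeping: one must verify that the map $\varphi$ computed from $\{(I_k,r_k)\}_{k=1}^l$ indeed agrees with the named $\varphi[\ast]$ after the unordered-set identifications $\varphi[4;i,j]=\varphi[4;k,l]$ noted in the remark following Definition \ref{df:map}, and that the formula of Lemma \ref{thm:imagevarpi} is applied with the correct ordering of $I_k$ to reproduce the tabulated representatives up to the $G$-action.
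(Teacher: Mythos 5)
Your proposal is correct and matches the paper's own treatment: the paper likewise obtains Theorem \ref{thm:single} by combining Lemma \ref{thm:splittinglist} (to see that $\{([4],2)\}$ is the only element of $\mathrm{Image}(\varpi)$ outside $\mathcal P_{3,4}'$) with Lemma \ref{thm:rhobij} and the explicit representatives constructed in Lemma \ref{thm:imagevarpi}, the remaining work being exactly the case-by-case identification with the entries $\varphi[\ast]$ of Table \ref{tab:vpsingle}. No gap here; your sample computation for $\{(\{1\},1),(\{2,3,4\},2)\}$ reproduces the tabulated representative $[e_1,e_2,e_3,e_2+e_3]$ correctly.
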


\subsection{Description of infinitely many orbits}
\label{orbdecomp}

In Section \ref{prbdecomp}, we described the decomposition of the multiple projective space $(G/Q)^4\cong(\pr{2})^4$ into a finite number of $G$-invariant fibres of $\varpi\colon (\mathbb P^2)^4\to\mathcal P_4$ defined in Definition \ref{df:splitting}. Furthermore, for the subset $\mathcal P_{3,4}'\subset \mathrm{Image}(\varpi)$ defined in Lemma \ref{thm:rhobij}, each fibre of $\varpi$ on it was single $G$-orbit (see Theorem \ref{thm:single}). On the other hand, the fibre of $\{([4],2)\}$, which is the only element in $\mathrm{Image}(\varpi)\setminus \mathcal P_{3,4}'$, is not. In this section, 
we describe the $G$-orbit decomposition of this extra fibre. For this aim, we define some notations as follows: 
\bedef \label{df:mapsplit}
\been
\item The subset $(\pr{1})'$ of $\pr{1}$ denotes 
$
\pr{1}\setminus\left\{[e_1],[e_2],[e_1+e_2] \right\}. 
$
\item Define the map $\varphi[6],\varphi[5;i,j]\in Map(2^{[4]},\mathbb N)$ for $1\leq i<j\leq 4$ as follows: 
\[\varphi[6]\colon I\mapsto \min\{\#I,2\},\ \ \ \varphi[5;i,j]\colon I\mapsto \min\{\#(I/i\sim j),2\}.\]
\item For $p=[p_1e_1+p_2e_2]\in\pr{1}$, we define a subset of $(\pr{2})^4$ by 
\[
\orb{5;p}:=\left\{\left.[v_i]_{i=1}^4\in (\pr{2})^4\right|[v_1]\neq [v_2],\;v_3=v_1+v_2,\;v_4=p_1v_1+p_2v_2\right\}.
\]
\enen
\end{df}

Consider an element $v=[v_i]_{i=1}^4\in (\mathbb P^2)^4$, then it lies in the fibre of $\{([4],2)\}$ if and only if $\{v_i\}_{i=1}^4$ spans $2$-dimensional space and is indecomposable. 
If $\dim\langle v_i\rangle_{i=1}^4=2$, there exists a pair $\{v_i,v_j\}$ which forms a basis of this space. Then $\{v_i,v_j,v_k,v_l\}$ is indecomposable if and only if either $v_k$ or $v_l$ is contained in neither $\langle v_i\rangle$ nor $\langle v_j\rangle$ where $\{i,j,k,l\}=[4]$. Hence, $\varpi(v)=\{([4],2)\}$ if and only if its rank is $2$ and there exists a triple of $\{[v_i]\}_{i=1}^4$ which are distinct. 

Let $\{[v_1],[v_2],[v_3]\}$ be a distinct triple, then there is a isomorphism $g\in GL_3$ sending it to $\{[e_1],[e_2],[e_1+e_2]\}$. Then the line $[v_4]$ is sent to some $p=[p_1e_1+p_2e_2]\in\mathbb P^1$. Remark that $p$ is independent of the choice of $G$-action since $G$-action stabilising $[e_1,e_2,e_1+e_2]$ has to be a scalar action on this $2$-dimensional space. 

Now if $p\in \left(\mathbb P^1\right)'$, then we have $\pi(v)=\varphi[6]$. On the other hand, $\pi(v)=\varphi[5;1,4]$, $\varphi[5;2,4]$, or $\varphi[5;3,4]$ if $p=[e_1]$ , $[e_2]$, or $[e_1+e_2]$ with respectively. By considering other distinct triples, all $\varphi[5;\ast]$ can be covered, and we obtain the following: 

\begin{thm} \label{thm:parameter} For the maps $\varpi\colon (\pr{2})^4\overset{\pi}{\to} Map(2^{[4]},\mathbb N)\overset{\rho}{\to}\mathcal P_{4}$ defined in Definition \ref{df:splitting}, (\ref{eq:rankmatrix}), and Lemma \ref{thm:rho}, 
\begin{enumerate}
\item for the only element $\{([4],2)\}$ in $\mathrm{Image}(\varpi)\setminus\mathcal P_{3,4}'$, the fibre of $\left.\rho\right|_{\mathrm{Image}(\pi)}$ is fullfilled with the maps $\varphi[6]$ and $\varphi[5;i,j]$ ($1\leq i<j\leq 4$) defined in Definition \ref{df:mapsplit}. 
\item Each fibre $\pi^{-1}(\varphi[5;i,j])$ is a single $G$-orbit through $[e_1,e_1,e_2,e_1+e_2]$ (if $\{i,j\}=\{1,2\}$), and the fibre $\pi^{-1}(\varphi[6])$ is decomposed into $G$-orbits as follows: 
\[\pi^{-1}(\varphi[6])=\coprod_{p\in\left(\mathbb P^1\right)'}\mathcal O(5;p)=\coprod_{p\in\left(\mathbb P^1\right)'}G\cdot[e_1,e_2,e_1+e_2,p].\]
\end{enumerate}
\end{thm}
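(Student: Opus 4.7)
My strategy is to pick a convenient ordered sub-triple of the four points, use $G$-transitivity on distinct collinear triples in $\mathbb P^2$ to put it in a standard normal form, and then read off the content of the remaining datum. For part (1), let $v=[v_i]_{i=1}^4\in\varpi^{-1}(\{([4],2)\})$, so $\dim\langle v_i\rangle_{i=1}^4=2$ and $v$ is indecomposable. I first claim that among the four lines $[v_1],\ldots,[v_4]$ there must be at least three distinct ones: if only two distinct lines $L,L'$ appeared, partitioning $[4]$ by which of $L,L'$ each $v_i$ lies on would produce a decomposition as in Definition \ref{df:indecomposable} because $L\cap L'=\bm 0$, contradicting indecomposability. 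Given this, only two configurations survive: all four lines distinct, or exactly one coincident pair $\{i,j\}$. In the former case, every nonempty $I\subset[4]$ satisfies $\dim\langle v_i\rangle_{i\in I}=\min\{\#I,2\}$, giving $\pi(v)=\varphi[6]$. In the latter case, $\dim\langle v_i\rangle_{i\in I}$ depends only on the quotient set $I/i{\sim}j$, and a direct comparison with Definition \ref{df:mapsplit} yields $\pi(v)=\varphi[5;i,j]$.

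For part (2), I use that $G=GL_3$ acts transitively on ordered triples of distinct points in $\mathbb P^2$ lying on a common line: any such triple lies in a $2$-dimensional subspace of $\K^3$ whose first two representatives can be sent to $e_1,e_2$ by $GL_3$, after which the third distinct point on the line can be rescaled to $[e_1+e_2]$. For $\pi^{-1}(\varphi[5;i,j])$ with, say, $\{i,j\}=\{1,2\}$, the indecomposable configuration has exactly the triple $([v_1],[v_3],[v_4])$ distinct and collinear; normalising it to $([e_1],[e_2],[e_1+e_2])$ forces $v=[e_1,e_1,e_2,e_1+e_2]$, so the fibre is a single orbit, and the other indices are handled symmetrically. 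For $\pi^{-1}(\varphi[6])$ all four points are distinct; normalising $([v_1],[v_2],[v_3])$ to $([e_1],[e_2],[e_1+e_2])$ sends $[v_4]$ to a point $p=[p_1e_1+p_2e_2]\in\mathbb P^1$ which, by distinctness, avoids $[e_1],[e_2],[e_1+e_2]$, hence lies in $(\mathbb P^1)'$.

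The one subtle point, and the main obstacle, is to verify that this $p$ is a well-defined invariant of the $G$-orbit and that distinct $p$ give distinct orbits; this reduces to the stabiliser calculation for the ordered triple $([e_1],[e_2],[e_1+e_2])$. If $g\in G$ fixes these three projective classes then $ge_1=\lambda e_1$, $ge_2=\mu e_2$, and $g(e_1+e_2)\in\langle e_1+e_2\rangle$ forces $\lambda=\mu$, so $g$ acts as a scalar on $\langle e_1,e_2\rangle$ and fixes every point of $\mathbb P(\langle e_1,e_2\rangle)$. Consequently the parameter $p$ is intrinsic, and $\mathcal O(5;p)=G\cdot[e_1,e_2,e_1+e_2,p]$ follows once one checks that the vector-level relations $v_3=v_1+v_2$ and $v_4=p_1v_1+p_2v_2$ in Definition \ref{df:mapsplit} amount precisely to rigidifying representatives $(v_1,v_2)$ up to a common scalar (using $v_3$) and then placing $[v_4]$ at the correct cross-ratio position. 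Combining the above, $\pi^{-1}(\varphi[6])=\coprod_{p\in(\mathbb P^1)'}\mathcal O(5;p)$ is the desired decomposition and completes both claims of the theorem.
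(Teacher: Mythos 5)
Your proposal is correct and follows essentially the same route as the paper: characterise the fibre of $\varpi$ at $\{([4],2)\}$ as rank-$2$ indecomposable configurations (equivalently, rank $2$ with at least three distinct lines), normalise a distinct collinear triple to $([e_1],[e_2],[e_1+e_2])$, and use that the stabiliser of this triple acts by scalars on $\langle e_1,e_2\rangle$ to make the parameter $p\in(\mathbb P^1)'$ a well-defined orbit invariant separating the orbits $\orb{5;p}$. The only cosmetic difference is that you split cases by the number of distinct lines before normalising, whereas the paper normalises first and reads off $\varphi[6]$ versus $\varphi[5;\ast]$ from where $p$ lands.
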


\section{Closure relations among orbits}

In this section, we determine the closure relations among $GL_3$-orbits on $(\pr{2})^4$ and $GL_3$-invariant fibres described in Theorems \ref{thm:single} and \ref{thm:parameter}. 

For an integer $d$, consider the $\mathbb K$-submodule $\mathcal H^d_n$ of the $n$-variable polynomial ring consisting of all homogeneous polynomials of the degree $d$. Then the polynomial ring $Pol(M(n,m))$ over $n\times m$-matrices admits a $\mathbb N^m$-graded structure as $Pol(M(n,m))=\bigoplus_{\bm d\in \mathbb N^m}\mathcal H_{n,m}^{\bm d}$ where $\mathcal H_{n,m}^{(d_1,d_2,\ldots,d_m)}:=\mathcal H_n^{d_1}\otimes \mathcal H^{d_2}_n\otimes\cdots\otimes \mathcal H^{d_m}_n$. 
Then closed subsets in $(\mathbb P^{n-1})^m$ correspond to the zero-point sets $Z(I)$ of homogeneous ideals $I=\bigoplus I\cap\mathcal H_{n,m}^{\bm d}$ of  $Pol(M(n,m))$. 
In particular, the irreducibility of $Z(I)$ also corresponds to whether $I$ is primary or not.

\bedef \label{thm:varphirelation}
For $\varphi$, $\psi\in Map(2^{[m]},\mathbb  N)$, we say that $\varphi\leq\psi$ if $\varphi(I)\leq\psi(I)$ for all $I\subset[m]$. 
\end{df}
With this notation,  we state the result on the closure relations among $\GL{3}$-invariant fibres of  the map $\pi\colon (\mathbb P^2)^4\to Map(2^{[4]},\mathbb N)$ defined in (\ref{eq:rankmatrix}). 
\beprop \label{thm:clsingle}
For $\varphi\in \mathrm{Image}(\pi)\subset Map(2^{[4]},\mathbb N)$, we have 
$\overline{\pi^{-1}(\varphi)}=\coprod_{\psi\leq\varphi}\pi^{-1}(\psi)$. 
\enprop
\begin{proof}
For $\varphi\in Map(2^{[m]},\mathbb N)$, we have 
\begin{align*}
\coprod_{\psi\leq\varphi}\pi^{-1}(\psi)&=\left\{[v]\in(\pr{n-1})^m\left|\ \dim\langle v_i\rangle_{i\in I}\leq \varphi(I) \textrm{ for all }I\subset [m]\right.\right\}  \\
&=\left\{[v]\in(\pr{n-1})^m\left|\ \textrm{ all }(\varphi(I)+1)\textrm{-minors of }(v_i)_{i\in I}\textrm{ vanish for all }I\subset[m]\right.\right\}, \\
\pi^{-1}(\varphi)&=\left\{[v]\in(\pr{n-1})^m\left|\ \dim\langle v_i\rangle_{i\in I}= \varphi(I) \textrm{ for all }I\subset [m]\right.\right\}  \\
&=\left\{[v]\in(\mathbb P^{n-1})^m\left|\begin{array}{l}\textrm{there exists some non vanishing }\\ \varphi(I)\textrm{-minor of }(v_i)_{i\in I} \textrm{ for all }I\subset[m]\end{array}\right.\right\}\ \cap\ \coprod_{\psi\leq\varphi}\pi^{-1}(\psi).
\end{align*}
From this characterisation, $\coprod_{\psi\leq \varphi}\pi^{-1}(\psi)$ is a closed subset defined by the homogeneous ideal generated by all $(\varphi(I)+1)$-minors consisting of columns contained in $I$, and $\pi^{-1}(\varphi)$ is open in it. Hence we have $\overline{\pi^{-1}(\varphi)}\subset \coprod_{\psi\leq \varphi}\pi^{-1}(\psi)$. 
Furthermore, since $\pi^{-1}(\varphi)$ is not empty for $\varphi\in\mathrm{Image}(\pi)$, we can prove the converse inclusion by checking the irreducibility of $\coprod_{\psi\leq\varphi}\pi^{-1}(\psi)$. 

Now, let $X_{n,m}(r)\subset (\mathbb P^{n-1})^m$ denote the irreducible closed subset consisting of all matrices whose rank is less than or equal to $r$. Then for the maps $\varphi=\varphi[8]$, $\varphi[7;i]$, $\varphi[6;i,j]$, $\varphi[6]$, $\varphi[5;i,j]$, $\varphi[4;\ast]$, $\varphi[2]$ in $\mathrm{Image}(\pi)$ introduced in Definitions \ref{df:map} and \ref{df:mapsplit}, the closed subsets $\coprod_{\psi\leq \varphi}\pi^{-1}(\psi)$ is naturally identified with $X_{3,4}(3)$, $X_{3,3}(2)\times X_{3,1}(1)$, $X_{3,3}(3)$, $X_{3,4}(2)$, $X_{3,3}(2)$, $X_{3,2}(2)$, and $X_{3,1}(1)$ respectively. Hence they are all irreducible and the claim holds. 
\end{proof}
According to Theorems \ref{thm:single} and \ref{thm:parameter}, the $GL_3$-invariant fibres $\pi^{-1}(\varphi)$ are all $GL_3$-orbits by themselves unless $\varphi=\varphi[6]$. On the other hand, $\pi^{-1}(\varphi[6])$ is decomposed in infinitely many orbits $\orb{5;p}$ introduced in Definition \ref{df:mapsplit}. Hence in the next, we shall determine the closure of $\orb{5;p}$, which completes the determination of all closure relations among orbits.  
\beprop \label{thm:clparameter}
For the orbits $\orb{5;p}:=GL_3\cdot([e_1], [e_2], [e_1+e_2],p)$, $\pi^{-1}(\varphi[4;i])$, and $\pi^{-1}(\varphi[2])$ where $p\in(\pr{1})'$ and $1\leq i\leq 4$ (see Definitions \ref{df:map} and \ref{df:mapsplit}) , we have 
\[
\overline{\orb{5;p}}=\orb{5;p}\ \amalg\ \coprod_{i=1}^4\pi^{-1}(\varphi[4;i])\ \amalg\ \pi^{-1}(\varphi[2]).
\] 
\enprop 
\belem \label{thm:iotahomeo}For an linear inclusion $\iota\colon \mathbb K^r\hookrightarrow\mathbb K^n$, the map below is a closed embedding.  
\[\tilde\iota\colon GL_r\backslash(\mathbb P^{r-1})^m \to GL_n\backslash (\mathbb P^{n-1})^m\colon [v_i]_{i=1}^m \mapsto [\iota(v_i)]_{i=1}^m. \]
\enlem
\begin{proof}
Since $g\in GL_r$ defines a linear isomorphism in $\mathrm{Image}(\iota)$, there exists a linear isomorphism $\tilde g\in GL_n$ such that $\iota\circ g=\tilde g\circ \iota$ by taking the direct product with some linear isomorphism on the complement space. Hence the map $\tilde \iota$ is well-defined. On the other hand, for a linear isomorphism $\tilde g\in GL_n$ which satisfies $\tilde g\cdot[\iota(v_i)]_{i=1}^m=[\iota(w_i)]_{i=1}^m$, it defines a linear isomorphism between $\langle v_i\rangle_{i=1}^m$ and $\langle w_i\rangle_{i=1}^m$ sending $[v_i]$ to $[w_i]$ via $\iota$. So there is an isomorphism $g\in GL_r$ satisfying $g[v_i]=[w_i]$. Hence the map $\tilde\iota$ is injective. 

Since the continuity is obvious, we shall only show that the map is closed. For a closed subset $C$ of $GL_r\backslash (\mathbb P^{r-1})^m$, there exists a $GL_r$-invariant zero point set $Z(I)\subset (\mathbb P^{r-1})^m$ such that $C=GL_r\backslash Z(I)$ where $I$ is a homogeneous ideal in $Pol(M(r,m))$. Then we have $\tilde\iota(C)=GL_n\backslash GL_n\cdot\iota(Z(I))$. 

Now we define a homogeneous ideal $J\subset Pol(M(n,m))$ generated by homogeneous polynomials $f\circ p$ for all $f\in I$ and linear maps $p\colon \mathbb K^n\to\mathbb K^r$. 

Let $v=[v_i]_{i=1}^m \in Z(J)\cap X_{n,m}(r)$ where $X_{n,m}(r)$ is a closed subset of $(\mathbb P^{n-1})^m$ consisting of all elements with the rank less than $r+1$. Then there exixts a linear isomorphism $g\in GL_n$ sending $v_i$ into the $r$-dimensional space $\mathrm{Image}(\iota)$. Hence, taking some linear projection $p\colon \mathbb K^n\twoheadrightarrow\mathbb K^r$ such that  $\iota p$ is the identity map on $\mathrm{Image}(\iota)$, we have $v=g^{-1}gv=g^{-1}\iota pgv\in GL_n\cdot \iota(pgv)$, and $f(pgv)=f\circ(pg)(v)=0$ for all $f\in I$. Hence $v\in GL_n\cdot \iota(Z(I))$. 

On the other hand, let $v \in Z(I)$. Since $Z(I)$ is $GL_r$-invariant, $f(gv)=0$ for all $g\in GL_r$. Hence $f(xv)=0$ holds for all linear endomorphisms $x$ on $\mathbb K^r$, which leads that $f\circ p(\iota(v))=f(p\iota v)=0$ for all $p\colon \mathbb K^n\to\mathbb K^r$. Hence $\iota(Z(I))\subset Z(J)$. 

From these arguments, we have shown that $GL_n\cdot \iota(Z(I))=Z(J)\cap X_{n,m}(r)$, which leads that the map $\tilde\iota$ is closed. 

\end{proof}
From this lemma, we only have to determine the closure of 
$GL_2\cdot [e_1,e_2,e_1+e_2,p]=\tilde\iota^{-1}(\mathcal O(5;p))\subset(\mathbb P^1)^4$. Remark that $\tilde\iota$ intertwines the map $\pi$. For this aim, we introduce the following notation: for a $2\times 4$-matrix, $|i,j|$ denotes the $2$-minor with the $i$ and $j$-th columns. Then we define a homogeneous polynomial $P_{p}\in \mathcal{H}^{(1,1,1,1)}_{2,4}$ for $p=[p_1e_1+p_2e_2]\in\mathbb P^1$ by
\begin{align} \label{eq:poly}
P_{p}(x):=&p_2|1,3||4,2|+p_1|1,4||2,3| \\
=&-p_1|1,2||3,4|+(p_2-p_1)|1,3||4,2|=-p_2|1,2||3,4|+(p_1-p_2)|1,4||2,3|. \notag
\end{align}
Remark that the equalities hold since $|1,2||3,4|+|1,3||4,2|+|1,4||2,3|=0$ from the general property of determinants. Considering the $\mathcal S_4$-action on the columns, we have 
\[
P_{p}((1,2)x)=P_{{}^t(-p_2,-p_1)}(x), \;\; P_{p}((2,3)x)= P_{{}^t(p_2-p_1,p_2)}(x),\;\;
P_{p}((3,4)x)=P_{{}^t(-p_2,-p_1)}(x).
\]
Hence, for the group homomorphism $\Psi\colon\mathcal{S}_4\rightarrow\GL{2}$ generated by $(1,2), (3,4)\mapsto\scalebox{0.5}{$\left(\bear{cc}0&-1\\-1&0 \enar\right)$}$ and $(2,3)\mapsto\scalebox{0.5}{$\left(\bear{cc}-1&1\\0&1\enar\right)$}$, we have
\begin{align}
P_{p}(\sigma^{-1} x)=P_{\Psi(\sigma)p}(x)\;\;\textrm{for }\sigma\in\mathcal{S}_4. 
\end{align}
Remark that the kernel of $\Psi$ is the Klein $4$-group $\{\mathrm{id}, (1,2)(3,4), (1,3)(2,4), (1,4)(2,3)\}$, and the image of  $\Psi$ acts on the $3$ points $\{0,1,\infty\}\subset\pr{1}$ effectively, which is isomorphic to $\mathcal{S}_3$. 
\belem \label{thm:polyirred}
The polynomial $P_{p}$ is irreducible if and only if $p\in(\pr{1})'$ (see Definition \ref{df:mapsplit}).  
\enlem
\begin{proof}
Remark that $p$ is in $\left(\mathbb P^1\right)'$ if and only if $p_1p_2(p_1-p_2)\neq 0$. Since the polynomial $P_p$ is of the homogeneous degree $(1,1,1,1)$, we only have to check that it does not have any divisors with the homogeneous degree neither $(1,0,0,0)$, $(0,1,0,0)$, $(0,0,1,0)$, $(0,0,0,1)$, $(1,1,0,0)$, $(1,0,1,0)$, nor $(1,0,0,1)$.



By an direct computation, we have  
\begin{align*}
P_{}(x)&=x_{1,1}x_{1,2}((p_1-p_2)x_{2,3}x_{2,4})+x_{2,1}x_{2,2}((p_1-p_2)x_{1,3}x_{1,4}) \\
&\;\;+x_{1,1}x_{2,2}(p_1x_{1,3}x_{2,4}-p_2x_{2,3}x_{1,4})-x_{2,1}x_{1,2}(p_2x_{1,3}x_{2,4}-p_1x_{2,3}x_{1,4}). 
\end{align*}
Hence it is divided by a polynomial of the homogeneous degree ${(1,1,0,0)}$ if and only if $p_1= p_2$. 
Similarly, since $P_{p}((2,3)x)=P_{{}^t(p_2-p_1,p_2)}(x)$ and $P_{p}((2,4)x)=P_{{}^t(p_1,p_1-p_2)}(x)$, 
$P_{p}(x)$ is divided by a polynomial of the homogeneous degree ${(1,0,1,0)}$ (resp. ${(1,0,0,1)}$) if and inly if $p_1=0$ (resp. $p_2=0$). 

On the other hand, let $P_{p}(x)$ has a divisor $f(x_1)$ of the homogeneous degree $(1,0,0,0)$. 
Since $P_{p}(x)$ is fixed under the action of $(1,2)(3,4)$, $(1,3)(2,4)$, and $(1,4)(2,3)$, it is factorised as $P_{\tilde{p}}(x)=cf(x_1)f(x_2)f(x_3)f(x_4)$, it can occur only if $p_1p_2(p_1-p_2)=0$ from the argument above. 
\end{proof}

Next, we observe the relationship between the polynomial $P_{p}$ introduced in (\ref{eq:poly}) and the orbit $GL_2\cdot[e_1,e_2,e_+e_2,p]$:
\belem \label{thm:orb5pcharacterise}
For $v_p=[e_1,e_2,e_1+e_2,p]\in(\mathbb P^1)^4$ and the homogeneous polynomial $P_{p}$ introduced in (\ref{eq:poly}), we have the following: 
\bealign
{\GL{2}}\cdot v_p=\left\{[v]\in(\pr{1})^4\left|\ |1,2||2,3||3,1|\neq 0,
\;P_{p}(v)=0\right.\right\}. \label{eq:orb5pcharacterise}
\end{align}
\enlem
\begin{proof}
First of all, $P_{p}(x)$ is relatively $GL_2$-invariant since all minors are relatively $GL_2$-invariant with the character $g\mapsto (\det g)^2$. Let $X_p\subset(\pr{1})^4$ be the right-hand side of (\ref{eq:orb5pcharacterise}), then the orbit $GL_2\cdot v_p$ is included in $X_p$ clearly from $v_p\in X_p$ and the $GL_2$-invariance of $P_p$. 

Conversely, for $[v]=[v_1,v_2,v_3,v_4]\in X_p$, then $\{v_1,v_2\}$ is linearly independent and 
\begin{align*}
|2,3|v_1+|3,1|v_2&=-|1,2|v_3\\
|1,4|\left(p_1|2,3|v_1+p_2|3,1|v_2\right)&=p_1|1,4||2,3|v_1+p_2|1,4||3,1|v_2 \\
&=-p_2|1,3||4,2|v_1+p_2|1,4||3,1|v_2=-p_2|3,1|\left(|2,4|v_1+|4,1|v_2\right)\\
&=p_2|3,1||1,2|v_4
\end{align*}
Hence $v=(|2,3|v_1\ |3,1|v_2)\cdot [e_1,e_2,e_1+e_2,p]\in GL_2\cdot v_p$. 
\end{proof}

From Lemmas \ref{thm:polyirred} and \ref{thm:orb5pcharacterise}, the orbit $GL_2\cdot v_p$ is open in the irreducible closed subset $Z(P_p)$ where $p\in (\mathbb P^1)'$. Hence we have $\overline{GL_2\cdot v_p}=Z(P_p)$, and shall only determine the set $Z(P_p)$. 
\belem \label{thm:polysolution}
Let $p\in(\pr{1})'$ and $v_p:=[e_1, e_2, e_1+e_2, p]\in \mathcal O_p$. 
Then the zero point set of the polynomial $P_p$ defined in (\ref{eq:poly}) is given as follows: 
\beeq \label{eq:polysolution}
Z(P_p) = GL_2\cdot v_p \amalg\coprod_{i=1}^4\pi^{-1}(\vp{4;i})\amalg\pi^{-1}(\vp{2}).
\eneq
\enlem
\begin{proof}
We already have classified orbits in $GL_2\backslash (\mathbb P^1)^4\hookrightarrow GL_3\backslash (\mathbb P^2)^4$ in Theorems \ref{thm:single} and \ref{thm:parameter}. 
Hence, we shall only determine whether each of them is contained in $Z(P_p)$ or not. 

If any $2$-minors do not vanish, we only have to consider the orbit $GL_2\cdot v_q= \tilde\iota^{-1}(\mathcal O(5;q))$ where $q\in(\pr{1})'$. Then since $P_p(v_q)=p_2q_1-p_1q_2$, it is contained in $Z(P_p)$ if and only if $p=q$. 

On the other hand, consider the case where exists some vanishing $2$-minor. Since the polynomial $P_p$ is of the form $c_1|i,j||k,l|+c_2|i,k||l,j|$ for some $c_1c_2(c_1-c_2)\neq 0$, if $P_p(v)=0$, then there has to be a triple of columns of $v$ whose $2$-minors all vanish. Conversely, if there exists a triple of columns whose $2$-minors all vanish, then clearly $P_p(v)=0$. 
\end{proof}

Combining these results, we completed the proof of the closure relations in Theorem \ref{thm:orb34}. 

\end{document}